\title{Finite $p$-Irregular Subgroups of $\PGL_2(k)$}
\author{Xander Faber}
\address{IDA Center for Computing Sciences \\
Bowie, MD}
\email{awfaber@super.org}
\thanks{I originally wrote this article in 2011 and then let it languish for a
  decade because of a lackluster response from several journals. I am grateful
  to several mathematicians who have reached out in the intervening years to
  encourage another attempt at publication. In addition, I would like to thank
  Geoff Robinson for pointing me to Dickson's work on the subject, and Mike
  Zieve for suggesting the papers of Gierster and E.H. Moore. I held a National
  Science Foundation Postdoctoral Fellowship from 2009-2010 when I started this
  project.}
\begin{document}
\begin{abstract}
In the late 19th century, Klein inaugurated a program for describing the finite
subgroups of $\PGL_2(k)$ by treating the case in which the field $k$ is the
complex numbers. Gierster and Moore extended Klein's arguments to deal with
finite fields. In the past century, additional contributions to this problem
were made by Serre, Suzuki, and Beauville, among others.  We complete this
program by giving a classification of the finite subgroups of $\PGL_2(k)$ with
order divisible by $p$, up to conjugation, for an arbitrary field $k$ of
positive characteristic~$p$.
\end{abstract}

\maketitle


\section{Introduction}
Any finite group of rotations of the round 2-sphere describes a regular polygon
or polytope: look at the action of the group on a point of the sphere. In the
1870s, Klein showed that the symmetry groups of regular polygons and polytopes
are the \textit{only} way to obtain finite groups of rotations of the sphere
\cite[Ch.V.2]{Klein_Icosahedron}. Klein's approach was to use stereographic
projection to identify the 2-sphere in $\RR^3$ with the Riemann sphere $\CC \cup
\{\infty\}$, and then to classify the finite subgroups of M\"obius
transformations $z \mapsto \frac{az + b}{cz+d}$. In modern parlance, we
could say that Klein classified the finite subgroups of $\PGL_2(\CC)$, the group
of invertible $2 \times 2$ complex matrices modulo scalar matrices.

One can also look at M\"obius transformations with coefficients in a finite
field, as Galois did in the early 1830s while studying symmetries of equations
\cite[pp.28-30]{Galois_oevres}. Later, Klein's technique was extended by his
student, Gierster, to give a complete description of the subgroups of
$\PGL_2(\FF_p)$ when $p$ is an odd prime \cite{Gierster_PSLp}. A decade after
that, E.H. Moore further extended this method to classify the subgroups of
$\PGL_2(\FF_q)$ when $q$ is an arbitrary prime power \cite{Moore_PSLq}. The
definitive version of this method appears in the writing of Moore's student,
L.E. Dickson \cite[\S239-261]{Dickson_Linear_Groups_1901}.  (To be precise, all
  of the authors in this paragraph considered subgroups of $\PSL_2(\FF_q)$, the
  subgroup of $\PGL_2(\FF_q)$ consisting of classes of matrices with square
  determinant. As $\PSL_2(\FF_q)$ is of index at most $2$ inside
  $\PGL_2(\FF_q)$, little is lost in discussing $\PGL_2(\FF_q)$ instead.)
	
More recently, Beauville gave a beautiful exposition of the
classification of finite subgroups of $\PGL_2(k)$ of order prime to
$\mathrm{char}(k)$ --- for an arbitrary field $k$ --- using Galois cohomology
\cite{Beauville_Finite_Subgroups_2010}. His proof takes advantage of the
accidental isomorphism $\PGL_2(k) \cong \mathrm{SO}(k, q)$, where the latter is
the special orthogonal group for the quadratic form $q(x,y,z) = x^2 + yz$. (This
isomorphism exists because $\PGL_2(k)$ is the automorphism group of the
projective line, which may be embedded in $\PP^2$ as the conic $x^2 + yz = 0$.)
Serre also gives an excellent discussion of existence results for certain
subgroups of $\PGL_2(k)$ in \cite[\S2]{Serre_proprietes_galoisiennes}.

In the present work, we extend these classifications to \textit{arbitrary}
finite subgroups $G \subset \PGL_2(k)$. The case of new interest is then an
infinite field of characteristic $p > 0$ and a subgroup $G$ whose order is
divisible by $p$ --- a $p$-irregular subgroup. We learn that a finite
$p$-irregular subgroup of $\PGL_2(k)$ is isomorphic to $\PSL_2(\FF_q)$ or
$\PGL_2(\FF_q)$ for $q$ a power of the characteristic of $k$, to a
$p$-semi-elementary group, to a dihedral group, or to the alternating group on 5
letters. Theorems~A and~B in the next section are our novel contributions in
this article: Theorem~A gives precise conditions for the existence of
$p$-irregular subgroups, and Theorem~B gives the classification of such
subgroups up to conjugacy inside $\PGL_2(k)$.
	
The subgroups of $\PGL_2$ came to the author's attention as automorphism groups
of rational functions, viewed as discrete dynamical systems on the projective
line. Given a rational function $\phi \in \QQ(z)$, the automorphism group is
$\Aut_\phi(\QQ) = \{f \in \PGL_2(\QQ) : f \circ \phi \circ f^{-1} = \phi\}$. The
author, Manes, and Viray designed an algorithm for computing $\Aut_\phi(\QQ) $
by piecing it together from $\Aut_\phi(\FF_p) \subset \PGL_2(\FF_p)$ for several
primes $p$ of good reduction for $\phi$ \cite{FMV}; evidently the classification
of subgroups of $\PGL_2(\FF_p)$ up to conjugation is invaluable for such a
pursuit. Such a classification does not seem to appear explicitly in the
literature in a form that can be readily cited, and so we deduce it as a special
case of the main results of the present work. See Theorem~D in the next
section.
	
This article consists of two main parts, which are somewhat disparate in
nature. The first part --- Sections~\ref{Sec: Fixed Points} through~\ref{Sec:
  p-irregular} --- is devoted to generalizing the arguments of Klein, Gierster,
and Moore, following Dickson's exposition
\cite[\S239--261]{Dickson_Linear_Groups_1901}, in order to give a complete
classification of finite subgroups of $\PGL_2(k)$ when $k$ is algebraically
closed.\footnote{A list of isomorphism types of subgroups of $\PSL_2(k)$ for $k$
algebraically closed appears in Suzuki's book
\cite[p.404]{Suzuki_Group_Theory_I}, but no description of conjugacy classes is
given. In addition, Suzuki incorrectly attributes the classification of
subgroups of $\PSL_2(\FF_q)$ to Dickson \cite[p.392]{Suzuki_Group_Theory_I}.}
The argument is elementary --- it uses only some basics of group actions, the
Sylow theorems, and a little knowledge of the action of $\PGL_2(k)$ on
$\PP^1(k)$. We have endeavored to keep this part of the article completely
self-contained; it could serve as the basis for a reading project by an
undergraduate with a background in abstract algebra. As a natural byproduct of
this approach, we were able to recover the classical description of finite
$p$-regular subgroups of $\PGL_2(k)$ for an algebraically closed field $k$
(cf. Theorem~C). We stress that nothing in this part of the article is new;
however, the next part depends heavily on these statements.
	
The second part of this paper --- Sections~\ref{sec:arithmetic_p_regular}
and~\ref{Sec: Proofs} --- completes the classification of finite subgroups of
$\PGL_2(k)$ by first passing to separably closed fields, and then by applying
Galois descent. This part is less elementary and draws heavily from Beauville's
paper \cite{Beauville_Finite_Subgroups_2010}. The main idea is to use a
cohomological parameterization of the set of conjugacy classes of subgroups in
$\PGL_2(k)$ that coincide over a separable closure of $k$. For $p$-irregular
subgroups, the relevant cohomology set turns out to be trivial, and so we reduce
to the case of separably closed fields. In this sense, the classification of
$p$-irregular subgroups is \textit{easier} than its $p$-regular counterpart.
	
We close this section with a more precise description of the contents of the
article. Section~\ref{sec:theorems} contains our notational conventions and the statements
of the main theorems. Section~\ref{Sec: Fixed Points} collects a number of
invaluable trace/determinant equations that characterize when an element of
$\PGL_2(k)$ has a certain small order. In Section~\ref{Sec: Special Subgroups},
we study several seemingly special subgroups of $\PGL_2(k)$; we then show that
these special cases exhaust all possible finite subgroups in Sections~\ref{Sec: p-regular} ($p$-regular subgroups) and~\ref{Sec: p-irregular} ($p$-irregular
subgroups). We will restrict to the case in which $k$ is algebraically closed in
Sections~\ref{Sec: Fixed Points} through~\ref{Sec: p-irregular}. Ignoring the
question of when certain equations actually have solutions in $k$ clarifies the
presentation dramatically. In the remainder of the paper, we will want to lean on
the results of Serre and Beauville on existence and classification of
$p$-regular subgroups of $\PGL_2(k)$; we recall these in the form we require in
Section~\ref{sec:arithmetic_p_regular}.  In Section~\ref{Sec: Separably closed}
we pass to separably closed fields $k$; this requires extra work only in
characteristic~2. Section~\ref{Sec: Descent} contains the Galois cohomology
computation necessary to pass from separably closed fields to the general
case. We finish in Section~\ref{Sec: Proofs} with proofs of the main theorems.


\section{The Classification}
\label{sec:theorems}

Let $k$ be a field. Define $\GL_2(k)$ to be the group of invertible $2 \times 2$
matrices with entries in $k$; it contains an isomorphic copy of $k^\times$ given
by multiples of the identity matrix. We write $\PGL_2(k) = \mathrm{GL}_2(k) /
k^\times$, and elements of $\PGL_2(k)$ will be represented by equivalence
classes of matrices $\mat{\alpha & \beta \\ \gamma & \delta}$ with $\alpha
\delta - \beta \gamma \neq 0$. Here $\mat{\alpha & \beta \\ \gamma & \delta} =
\mat{\lambda \alpha & \lambda \beta \\ \lambda \gamma & \lambda \delta}$ for any
$\lambda \neq 0$. We write $I$ for the identity of $\PGL_2(k)$. An element $s
\in \PGL_2(k)$ acts on $\PP^1(k) = k \cup \{\infty\}$ by the formula $s.z =
\frac{\alpha z + \beta}{\gamma z + \delta}$, with the usual conventions that
$s.\infty = \alpha / \gamma$ and $\alpha / 0 = \infty$.

The determinant map $\det: \mathrm{GL}_2(k) \to k^\times$ descends to a
homomorphism $\overline{\det}: \PGL_2(k) \to k^\times / (k^\times)^2$. Write
$\PSL_2(k)$ for the kernel of $\overline{\det}$.

The letter $q$ will always denote a power of $p$. We say that an element of a
finite group $G$ is $p$-regular if its order is prime to $p$, and we say that
$G$ is \textbf{$p$-regular} if all of its elements are $p$-regular. Otherwise,
an element or a group is \textbf{$p$-irregular}.

For the following statements, we will use the notation $\mathfrak{D}_{n}$,
$\mathfrak{S}_n$, and $\mathfrak{A}_n$ for the dihedral group with $2n$
elements, the symmetric group on $n$ letters, and the alternating group on $n$
letters, respectively. Write $\mu_n(k)$ for the group of $n$-th roots of unity
in $k$.

An abstract finite group $G$ will be called \textbf{$p$-semi-elementary} if it
has a unique Sylow $p$-subgroup $P$ of exponent $p$ with $G / P$ cyclic.  If $G
/ P$ is trivial, we say that $G$ is \textbf{$p$-elementary}. Write $k_\alg$ for
an algebraic closure of $k$. A $p$-semi-elementary subgroup $G \subset
\PGL_2(k)$ acts on $\PP^1(k_\alg)$ with a unique fixed point (\S\ref{Sec:
  p-groups}). If the fixed point is $k$-rational, we say that $G$ is
\textbf{split}; otherwise, $G$ is non-split.

As we will see in Theorem~B, any finite $p$-irregular subgroup of $\PGL_2(k)$ is
isomorphic to $\PSL_2(\FF_q)$ or $\PGL_2(\FF_q)$ for some $q$, to a
$p$-semi-elementary group, to a dihedral group, or to $\mathfrak{A}_5$. The
following theorem describes the precise conditions under which these types of
subgroups exist in $\PGL_2(k)$.

\begin{thmA*}[Existence of Finite $p$-Irregular Subgroups]
Let $k$ be a field of characteristic~$p > 0$.  
\begin{enumerate}
\item Let $q$ be a power of $p$. Then $\PGL_2(k)$ contains subgroups
  isomorphic to $\PGL_2(\FF_q)$ and $\PSL_2(\FF_q)$ if and only if $\FF_q
  \subset k$.

\item Let $m \ge 1$ and $n \ge 1$ be integers with $n$ coprime to $p$. Let $e$
  be the order of $p$ in $(\ZZ / n\ZZ)^\times$. Then $\PGL_2(k)$ contains a
  $p$-semi-elementary subgroup of order $p^m n$ if and only if $\FF_{p^e}
  \subset k$, $e \mid m$, and $m \le \dim_{\FF_p}(k)$.\footnote{The condition
  $\FF_{p^e} \subset k$ is equivalent to the assertion that there is a primitive
  $n$-th root of unity in $k$.}

\item If $p = 2$ and $m \ge 1$ is an integer, then $\PGL_2(k)$ contains a
  non-split $2$-elementary subgroup of order $2^m$ if and only if $k$ contains a
  non-square element.
						
\item If $p = 2$ and $n > 1$ is odd, then $\PGL_2(k)$ contains a dihedral
  subgroup $\mathfrak{D}_n$ if and only if $\zeta + \zeta^{-1} \in k$ for some
  primitive $n$-th root of unity $\zeta$.
		
\item If $p = 3$, then $\PGL_2(k)$ contains a subgroup isomorphic to
  $\mathfrak{A}_5$ if and only if $\FF_9 \subset k$.
\end{enumerate}
\end{thmA*}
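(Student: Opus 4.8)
The plan is to prove the two implications of part~(4) directly over $k$; this case, unlike the $p$-regular constructions, requires none of the Galois-descent machinery of Section~\ref{Sec: Descent}, since both the construction of a copy of $\mathfrak{A}_5$ and the obstruction to its existence can be exhibited over $k$ itself. Note first that $\mathfrak{A}_5$ has order $60$, which is divisible by $3$, so it is genuinely $3$-irregular; and the usual small models $\mathfrak{A}_5\cong\PSL_2(\FF_4)\cong\PSL_2(\FF_5)$ live in characteristics $2$ and $5$, so in characteristic~$3$ the relevant copy must be sought inside $\PGL_2(\FF_9)$.

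For the ``if'' direction, suppose $\FF_9\subseteq k$. Then $\PGL_2(\FF_9)$ embeds into $\PGL_2(k)$ --- the inclusion $\mathrm{GL}_2(\FF_9)\hookrightarrow\mathrm{GL}_2(k)$ descends injectively to the projective groups because a scalar matrix over $k$ that happens to lie in $\mathrm{GL}_2(\FF_9)$ is already scalar over $\FF_9$ --- so it suffices to place $\mathfrak{A}_5$ inside $\PGL_2(\FF_9)$. I would invoke the classical exceptional isomorphism $\PSL_2(\FF_9)\cong\mathfrak{A}_6$ (see \cite{Dickson_Linear_Groups_1901}; the orders match, $\tfrac12\cdot 9(9^2-1)=360=|\mathfrak{A}_6|$) and take the stabilizer of a point, $\mathfrak{A}_5\le\mathfrak{A}_6\cong\PSL_2(\FF_9)\le\PGL_2(\FF_9)$. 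Alternatively one may write down an explicit model: the non-split maximal torus of $\PGL_2(\FF_9)$ has order $q+1=10$ and hence contains an element of order $5$, and a short check produces an involution generating $\mathfrak{A}_5$ together with it.

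For the ``only if'' direction, suppose $G\subseteq\PGL_2(k)$ with $G\cong\mathfrak{A}_5$ and pick $s\in G$ of order $5$; lift $s$ to $\tilde s\in\mathrm{GL}_2(k)$. Since $5$ is prime to $p=3$, $\tilde s$ is semisimple, and its two eigenvalues $\lambda,\mu\in\kk$ are distinct: if $\lambda=\mu$, writing $\tilde s=\lambda(I+N)$ with $N$ nilpotent ($N^2=0$) gives $\tilde s^3=\lambda^3(I+3N+3N^2+N^3)=\lambda^3 I$ in characteristic~$3$, forcing the order of $s$ to divide $3$. So $\zeta:=\lambda/\mu$ is a primitive fifth root of unity. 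The characteristic polynomial of $\tilde s$ lies in $k[x]$ and has distinct roots, hence either (a)~$\lambda,\mu\in k$, so that $\zeta\in k^\times$ has multiplicative order $5$ and $\FF_3(\zeta)=\FF_{3^e}\subseteq k$, where $e$ is the order of $3$ modulo $5$; since $e=4$ and $2\mid 4$ we get $\FF_9\subseteq\FF_{81}\subseteq k$; or (b)~$\lambda,\mu$ generate a quadratic extension $k'/k$ and are swapped by the generator $\sigma$ of $\Gal(k'/k)$, so $\zeta^\sigma=\mu/\lambda=\zeta^{-1}$ and $w:=\zeta+\zeta^{-1}$ is $\sigma$-fixed, i.e.\ $w\in k$. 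Dividing $\Phi_5(\zeta)=0$ by $\zeta^2$ gives $w^2+w-1=0$; in characteristic~$3$ this polynomial equals $w^2+w+2$, which is irreducible over $\FF_3$ (its discriminant $-7\equiv 2$ is a nonsquare modulo~$3$), so $\FF_3(w)\cong\FF_9\subseteq k$. In every case $\FF_9\subseteq k$.

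The genuinely delicate point is case~(b) of the second direction: an order-$5$ element of $\PGL_2(k)$ need not be diagonalizable over $k$, and one must correctly pin down which subfield it forces into $k$. The crux is that $w^2+w+2$ is irreducible over $\FF_3$ --- equivalently that $2$ is a nonsquare modulo $3$ --- which is exactly what makes the condition $\FF_9\subseteq k$ rather than $\FF_{81}\subseteq k$. In the final write-up the order-$5$ analysis above could be replaced by a direct appeal to the trace/determinant characterization of order-$5$ elements from Section~\ref{Sec: Fixed Points}, but the substance is unchanged.
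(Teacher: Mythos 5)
The substantive issue is coverage: Theorem~A has four parts, and your proposal proves only part~(4). Parts~(1)--(3) --- existence of $\PGL_2(\FF_q)$ and $\PSL_2(\FF_q)$ subgroups, the $p$-semi-elementary case (where the conditions $\FF_{p^e}\subset k$ and $e\mid m\leq\dim_{\FF_p}(k)$ require the normal-form analysis of Borel subgroups and the descent machinery of Sections~\ref{Sec: Separably closed}--\ref{Sec: Descent}), and the $p=2$ dihedral case --- are not addressed at all, so as a proof of the stated theorem this is incomplete.

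What you do prove for part~(4) is correct and runs close to the paper's argument. For the ``only if'' direction the paper extracts an element of order~5, cites Beauville to conclude $\lambda=\zeta+\zeta^{-1}\in k$, and observes that $\lambda$ has minimal polynomial $X^2+X-1$ over $\FF_3$, hence generates $\FF_9$; you reprove the Beauville step from scratch via the eigenvalue/Galois analysis of a lift to $\mathrm{GL}_2(k)$, which is a sound and self-contained substitute (your verification that $w^2+w+2$ is irreducible over $\FF_3$ is the same crux as the paper's). For the ``if'' direction the paper writes down explicit generators $s=\mat{\lambda+1 & -1\\ 1 & 1}$, $t=\mat{-1 & \\ & 1}$ satisfying $s^5=t^2=(st)^3=I$ and invokes Lemma~\ref{Lem: Icosahedral presentation}, which has the advantage of producing the same normal form used later in the conjugacy classification; your route through $\PSL_2(\FF_9)\cong\mathfrak{A}_6$ and a point stabilizer is a legitimate alternative, though it leans on an exceptional isomorphism the paper never needs, and your second suggestion (``a short check produces an involution'') is not actually carried out. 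Neither direction of part~(4) requires descent, as you say, but uniqueness of the conjugacy class (Theorem~B) does --- that is where the paper uses Corollary~\ref{Cor: Descent}, and it is worth being clear that your remark about avoiding descent applies only to the existence statement.
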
	

\begin{thmB*}[Classification of Finite $p$-irregular subgroups]
Let $k$ be a field of characteristic $p > 0$. 
\begin{enumerate}
\item Fix $q > 2$ a power of $p$ such that $\FF_q \subset k$.  There is exactly
  one conjugacy class of subgroups isomorphic to each of $\PSL_2(\FF_q)$ and
  $\PGL_2(\FF_q)$.
					
\item Let $m,n \ge 1$ be integers with $n$ coprime to $p$, and suppose that $k$
  contains a primitive $n$-th root of unity. The conjugacy classes of split
  $p$-semi-elementary subgroups of order $p^m n$ are in bijection with the set
  of homothety classes\footnote{Two subgroups $\Gamma, \Gamma' \subset k$ are
    \textbf{homothetic} if $\Gamma' = \alpha \Gamma$ for some $\alpha \in
    k^\times$.}  of rank-$m$ subgroups $\Gamma \subset k$ that are stable under
  multiplication by elements of $\mu_n(k)$. The correspondence is
\[
\Gamma \mapsto \bp \mu_n(k) & \Gamma \\ & 1 \ep.
\]

\item Suppose that $p = 2$, and let $m \ge 1$ be an integer. The conjugacy
  classes of non-split $2$-elementary subgroups of order $2^m$ are parameterized
  by pairs $(k(\tau), G)$, where $k(\tau)$ is a quadratic inseparable extension
  of $k$ and $G$ is a subgroup of order $2^m$ of the abelian group
  \[
 \{I\} \cup
  \left\{ \bp \alpha & \tau^2 \\ 1 & \alpha \ep \ : \ \alpha \in k
  \smallsetminus \{\tau\} \right\}.
  \]
						
\item\label{item:p2} Suppose that $p = 2$ and $n > 1$ is an odd integer such that $\lambda:=
  \zeta + \zeta^{-1} \in k$ for some primitive $n$-th root of unity $\zeta$.
  Let $\mathfrak{Dih}_n(k)$ denote the set of conjugacy classes of dihedral
  subgroups of $\PGL_2(k)$ of order $2n$. The map $\mathfrak{Dih}_n(k) \to
  k^\times / (k^\times)^2$ defined by $G \mapsto \overline{\det}(t)$ for any
  involution $t \in G$ is well defined and injective. It is surjective if $k$
  contains a primitive $n$-th root of unity.
					  
\item If $\FF_9 \subset k$, then there is exactly one conjugacy class of
  subgroups isomorphic to $\mathfrak{A}_5$.
\end{enumerate}
Any $p$-irregular subgroup of $\PGL_2(k)$ is among the five types listed here.
\end{thmB*}	

\begin{remark}
Over an algebraically closed field $k$, dihedral and $p$-semi-elementary
subgroups may be characterized geometrically as follows. A subgroup of
$\PGL_2(k)$ ($p$-regular or otherwise) is dihedral if and only if it stabilizes
a pair of distinct points of $\PP^1(k)$, but does not fix them. A subgroup is
$p$-semi-elementary if it fixes a unique point of $\PP^1(k)$.
\end{remark}

\begin{remark}
  The map $\mathfrak{Dih}_n(k) \to k^\times / (k^\times)^2$ from Theorem~B\eqref{item:p2} is
  not surjective in general. For example, taking $k = \FF_2(T)$ and $n = 3$, one
  can show that the class of $T$ does not lie in the image. 
\end{remark}


The methods used to prove Theorem~B allow us, with essentially no extra work, to
give an elementary classification of the finite $p$-regular subgroups of
$\PGL_2(k)$ up to conjugation when $k$ is separably closed. By elementary, we
mean that it avoids the use of representation theory; see
\cite{Beauville_Finite_Subgroups_2010} for the representation theory approach
and for analogues of Theorems~A and~B for $p$-regular finite subgroups.

\begin{thmC*}[Finite $p$-regular subgroups]
Let $k$ be a separably closed field, and let $G$ be a finite subgroup of
$\PGL_2(k)$ such that $p \nmid |G|$. Then up to conjugation, $G$ is one of the
following subgroups:
\begin{enumerate}
\item $G = \langle \mat{\zeta & \\ & 1} \rangle$ for some $\zeta \in k^\times$;
  here $G$ is cyclic.
			
\item $G = \langle \mat{\zeta & \\ & 1}\rangle \rtimes \langle \mat{ & 1 \\ 1 &}
  \rangle$ for some $\zeta \in k^\times$; here $G$ is dihedral.
			
\item\label{Item: algebraic A_4} $G = N \rtimes C \cong \mathfrak{A}_4$, where
  $N = \left\{ \mat{\pm 1 & \\ & 1}, \mat{& \pm 1\\1&}\right\}$ and $C = \left\{
  I, \mat{1 & i \\ 1 & -i}, \mat{ 1 & 1 \\ -i & i}\right\}$, where $i$ is any
  primitive fourth root of unity.

\item $G = \left\langle T, \mat{i & \\ & 1} \right\rangle \cong
  \mathfrak{S}_4$, where $T = N \rtimes C \cong \mathfrak{A}_4$ is the group in
  \eqref{Item: algebraic A_4}.
		
\item $G = \langle s, t \rangle \cong \mathfrak{A}_5$, where $s = \mat{\lambda &
  \\ & 1}$, $t = \mat{ 1 & 1 - \lambda - \lambda^{-1} \\ 1 & -1}$, and $\lambda$
  is any primitive fifth root of unity. These generators satisfy
  $s^5 = t^2 = (st)^3 = I$.
\end{enumerate}	
In particular, if $G, G' \subset \PGL_2(k)$ are (abstractly) isomorphic finite
groups, then they are conjugate.
\end{thmC*}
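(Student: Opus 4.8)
The plan is to verify that the proof of the algebraically closed classification given in Sections~\ref{Sec: Special Subgroups} and~\ref{Sec: $p$-regular} goes through verbatim over an arbitrary separably closed field $k$. That argument exhibits $G$ as $\PGL_2$-conjugate to exactly one of the five subgroups in the statement, and two ingredients need checking once we drop algebraic closedness: that these five representative subgroups have all of their entries in $k$, and that each conjugation performed along the way can be taken inside $\PGL_2(k)$.

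For the first ingredient, $|G|$ is prime to $p$, and every matrix entry appearing in the five normal forms is one of: a root of unity of order dividing $|G|$, hence a root of the separable polynomial $x^{|G|}-1$ over $\FF_p$; the element $\sqrt{-1}$, which occurs only in the $\mathfrak{A}_4$ and $\mathfrak{S}_4$ cases, where $|G|\in\{12,24\}$ forces $p\ne 2$ so that $\sqrt{-1}$ is a primitive fourth root of unity; or a primitive fifth root of unity, which occurs only for $\mathfrak{A}_5$, where $|G|=60$ forces $p\ne 5$. In each case the entry is a root of $x^n-1$ with $p\nmid n$, hence is separable over $\FF_p$ and so lies in the separably closed field $k$; thus each of the five listed subgroups is contained in $\PGL_2(k)$.

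For the second ingredient, the key elementary fact is that over a separably closed field a $p$-regular element $s\in\PGL_2(k)$ is conjugate, by an element of $\PGL_2(k)$, to a diagonal element. Indeed, if $s$ has order $n>1$ (the case $n=1$ being trivial), the characteristic polynomial $\mu^2-(\mathrm{tr}\,\hat s)\mu+\det\hat s$ of any lift $\hat s\in\mathrm{GL}_2(k)$ has two distinct roots whose ratio is a primitive $n$-th root of unity with $p\nmid n$; its discriminant is therefore nonzero, the polynomial is separable, and hence it splits over $k$ and $\hat s$ has a $k$-rational eigenbasis. (In characteristic $2$ one sees the splitting by substituting $\mu=(\mathrm{tr}\,\hat s)\nu$, which turns the equation into an Artin--Schreier equation $\nu^2+\nu+a=0$; this is the one point at which the argument is genuinely characteristic-dependent.) Since every normalization carried out in Sections~\ref{Sec: Special Subgroups} and~\ref{Sec: $p$-regular} is built from such diagonalizations of $p$-regular elements of $G$ --- together with extractions of square roots in odd characteristic, i.e., with splitting separable polynomials over $k$ --- those steps all go through over $k$.

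The concluding sentence then follows by inspecting the list: each abstract isomorphism type that occurs is realized by a single conjugacy class of subgroups of $\PGL_2(k)$ (for instance a cyclic subgroup $\langle\mathrm{diag}(\zeta,1)\rangle$ is determined up to conjugacy by its order), so isomorphic finite subgroups are conjugate. The step I expect to be the main obstacle is the second ingredient above: one must check with care that none of Dickson's normal-form reductions secretly demands an inseparable extension of $k$, and keep track of the characteristic-$2$ modifications to the trace and fixed-point computations of Section~\ref{Sec: Fixed Points}.
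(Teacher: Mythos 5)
Your proposal is correct and matches the paper's own route: the paper likewise takes the algebraically closed classification of Sections~\ref{Sec: Special Subgroups}--\ref{Sec: $p$-regular} as proved and observes at the start of Section~\ref{Sec: Separably closed} that algebraic closedness was used only to produce $k$-rational fixed points (which exist for $p$-regular elements because the fixed-point quadratic is separable, failing only for involutions in characteristic~2) and to extract square roots (needed only in odd characteristic, where $x^2-a$ is separable). Your reformulation via separability of the characteristic polynomial of a lift, and your extra check that the entries of the five normal forms are prime-to-$p$ roots of unity, are equivalent to and consistent with the paper's argument.
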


\begin{remark}
The symmetric and alternating subgroups that occur in the classification of
$p$-regular subgroups (Theorem~C) also appear in Theorems~A and~B, albeit in
disguise. When $p = 2$, the subgroups isomorphic to $\mathfrak{A}_4$ are all
conjugate to the $p$-semi-elementary subgroup $B(\FF_4) = \mat{\FF_4^\times &
  \FF_4 \\ & 1}$; a subgroup isomorphic to $\mathfrak{A}_5$ is necessarily
conjugate to $\PGL_2(\FF_4)$; and there is no finite subgroup of $\PGL_2(k)$
isomorphic to $\mathfrak{S}_4$. When $p = 3$, any subgroup isomorphic to
$\mathfrak{A}_4$ (resp. $\mathfrak{S}_4$) is conjugate to $\PSL_2(\FF_3)$
(resp. $\PGL_2(\FF_3)$).  When $p = 5$, any subgroup isomorphic to
$\mathfrak{A}_5$ is conjugate to $\PSL_2(\FF_5)$.
\end{remark}

\begin{remark}
  For the classification of $p$-regular subgroups up to conjugation over an
  arbitrary field, see \cite{Beauville_Finite_Subgroups_2010}. We summarize
  these results in \S\ref{sec:arithmetic_p_regular}.
\end{remark}

Finally, as a concrete application of these results we characterize all
subgroups of the projective linear group over a finite field. A $p$-regular
cyclic subgroup of $\PGL_2(\FF_q)$ is said to be \textbf{split} if it fixes two
$\FF_q$-rational points of $\PP^1(\FF_q)$. It is called \textbf{non-split} if it
fixes a pair of quadratic conjugate points of $\PP^1(\FF_{q^2})$. A $p$-regular
dihedral subgroup of $\PGL_2(\FF_q)$ will be called split or non-split depending
on whether its normal cyclic subgroup is split or non-split.

\begin{thmD*}
  Let $\FF_q$ be a finite field with $q = p^r$, and write $G = \PGL_2(\FF_q)$.
  Each conjugacy class of nontrivial subgroups of $G$ is described by one of the
  following ten cases:
\begin{enumerate}
\item (Split cyclic) If $n \ge 2$ satisfies $q \equiv 1 \pmod n$, then $G$
  contains a unique conjugacy class of split cyclic subgroups of order~$n$.

\item (Non-split cyclic) If $n \ge 2$ satisfies $q \equiv -1 \pmod n$, then $G$
  contains a unique conjugacy class of non-split cyclic subgroups of order~$n$.

\item (Split dihedral) Suppose $n \ge 3$ satisfies $q \equiv 1 \pmod n$. If $q
  \equiv 1 \pmod{2n}$, then $G$ contains two conjugacy classes of split dihedral
  subgroups of order $2n$; otherwise, it contains one such conjugacy class.

\item (Non-split dihedral) Suppose $n \ge 3$ satisfies $q \equiv -1 \pmod n$. If
  $q \equiv -1 \pmod{2n}$, then $G$ contains two conjugacy classes of non-split
  dihedral subgroups of order $2n$; otherwise, it contains one such conjugacy
  class.
  
\item ($4$-groups) If $q$ is odd, then $G$ contains exactly two conjugacy classes of
  subgroups isomorphic to $(\ZZ / 2\ZZ)^2$.
  
\item If $p$ is odd, or if $p = 2$ and $r$ is even, then $G$ contains a unique
  conjugacy class of subgroups isomorphic to $\mathfrak{A}_4$.
		
\item If $p \neq 2$, then $G$ contains a unique conjugacy class of subgroups
  isomorphic to $\mathfrak{S}_4$.
		
\item If $q \equiv 0, \pm 1 \pmod 5$, then $G$ contains a unique conjugacy class
  of subgroups isomorphic to $\mathfrak{A}_5$.
			
\item If $s \mid r$, then $G$ contains a unique conjugacy class of subgroups
  isomorphic to $\PGL_2(\FF_{p^s})$ and $\PSL_2(\FF_{p^s})$.

\item If $m, n$ are positive integers with $m \le r$ and $n$ coprime to $p$, and
  if $e \mid \gcd(r,m)$, where $e$ is the order of $p$ in $(\ZZ / n\ZZ)^\times$,
  then $G$ contains $p$-semi-elementary subgroups of order $p^m n$. The
  conjugacy classes of such subgroups are in bijection with the set of homothety
  classes of $\FF_{p^e}$-vector subspaces of $\FF_q$ of dimension $m/e$.
\end{enumerate}
\end{thmD*}


\section{Fixed points}
\label{Sec: Fixed Points}

\begin{convention*} 
Throughout this section, $k$ is an algebraically closed field of characteristic
$p \geq 0$.
\end{convention*}

\bigskip

The fixed points of an element $s \in \PGL_2(k)$ will be of paramount interest
in our study, and it will often be convenient to relocate them by a suitable
conjugation.  More precisely, if $s$ fixes the point $x \in \PP^1(k)$, and if $t
\in \PGL_2(k)$ satisfies $t.x = y$, then $tst^{-1}$ fixes $y$. We say that we
have ``conjugated the fixed point $x$ to~$y$''.

If $s =\mat{\alpha & \beta \\ \gamma & \delta } \in \PGL_2(k)$ is nontrivial,
then $s$ has either one or two distinct fixed points in $\PP^1(k)$. Indeed, the
fixed point equation $s.z = z$ is equivalent to
\begin{equation}
\label{Eq: Fixed points}
  \gamma z^2 + (\delta - \alpha)z - \beta = 0. 
\end{equation}
If $\gamma = 0$, then $z = \infty$ is fixed, and the fixed point equation has at
most one solution. Otherwise, the fixed point equation has at most two
solutions. An immediate consquence of these observations is the following well
known fact about $\PGL_2(k)$:

\begin{proposition}
  Let $s \in \PGL_2(k)$. If $s$ fixes three distinct points of $\PP^1(k)$, then
  $s$ is the identity.
\end{proposition}
	
Assume now that $s$ has finite order. If $s$ has a unique fixed point, we may
conjugate it to~$\infty$ and see that $s.z = z + \beta$, or in matrix form: $s =
\mat{1 & \beta \\ & 1}$. Since $s^m = \mat{1 & m\beta \\ & 1}$, we find $s$ has
order $p > 0$.
	
Conversely, if $s \in \PGL_2(k)$ has order $p > 0$, then we claim that $s$ has a
unique fixed point. For suppose $s$ has two distinct fixed points, and let us
conjugate them to~$0$ and~$\infty$. Then $s.z = \alpha z$, so that $\alpha^p =
1$; hence, $\alpha = 1$. This contradiction shows that $s$ must have a unique
fixed point.
		
\begin{lemma}
\label{Lem: order p}
Let $s = \mat{ \alpha & \beta \\ \gamma & \delta} \in \PGL_2(k)$ be nontrivial
and of finite order. Write $\tr(s)$ for the trace of $s$ (which is well defined
up to a scalar multiple). The following are equivalent:
\begin{enumerate}
\item  $s$ has a unique fixed point in $\PP^1(k)$;
\item  $s$ has order~$p > 0$; and
\item $\tr(s)^2 = 4\det(s)$.
\end{enumerate}
\end{lemma}

\begin{proof}
The equivalence of the first two statements was proved above. We now prove the
equivalence of the first and third statements. The final statement is
homogeneous and quadratic in the entries of the matrix $s$, so it is well
defined on $\PGL_2(k)$. Moreover, the first and third statements are invariant
under conjugation in $\PGL_2(k)$, so we may assume that $s$ fixes $\infty$. Thus
$\gamma = 0$. Now $\infty$ is the unique fixed point of $s$ precisely when
$\delta - \alpha = 0$ by~\eqref{Eq: Fixed points}, or equivalently, when
\[
	\tr(s)^2 - 4\det(s) = (\alpha + \delta)^2 - 4\alpha \delta = (\delta - \alpha)^2 = 0. \qedhere
\]
\end{proof}

We say that $s$ is \textbf{unipotent} if it satisfies the three equivalent
conditions in the lemma. Note that a unipotent element can be conjugated to
$\mat{1 & \beta \\ & 1}$.
	
\begin{lemma}
\label{Lem: order n}
Let $s  = \mat{ \alpha & \beta \\ \gamma & \delta} \in \PGL_2(k)$ be nontrivial. 
\begin{itemize}
\item $s$ has order~2 if and only if $\tr(s) = 0$.
\item $s$ has order~3 if and only if $\tr(s)^2 = \det(s)$.
\item $s$ has order~5 if and only if $\tr(s)^4 - 3\tr(s)^2\det(s) + \det(s)^2 = 0$. 
\end{itemize}
\end{lemma}

\begin{remark}
Note that these trace/determinant equations are homogeneous in the entries of
$s$ of degree 1, 2, and 4, respectively, so that their solutions are well
defined elements of $\PGL_2(k)$.
\end{remark}

\begin{remark}
Using Chebychev polynomials, one can formulate trace/determinant equations like
this to characterize the elements of any order.
\end{remark}

\begin{proof}
In all cases, the conditions are invariant under conjugation, so we may assume
that $s$ fixes~$\infty$. If $\infty$ is the only fixed point, then the previous
lemma shows that $s = \mat{1 & \gamma \\ & 1}$ for some nonzero $\gamma \in k$
and $s$ has order~$p$. For $p = 2, 3$, the third statement of the previous lemma
reduces immediately to the desired equations for the trace and determinant of
$s$. For $p = 5$, we see that
\[
\tr(s)^4 - 3\tr(s)^2\det(s) + \det(s)^2 = \left[ \tr(s)^2 - 4 \det(s) \right]^2,
\]
so that the desired criterion for order~5 reduces to the one in the previous
lemma.
	
Suppose now that $s$ has two distinct fixed points. Then we may suppose after
conjugation that it fixes $0$ and $\infty$. Hence $s = \mat{\lambda & \\ & 1}$,
and $s$ has order~$n$ precisely when $\lambda$ is a primitive $n$-th root of
unity. For $n = 2$, this means $\lambda = -1$, so that $\tr(s) = 0$. For $n =
3$, this means $\lambda^2 + \lambda + 1 = 0$, so that
\[
	\tr(s)^2 - \det(s) = (\lambda + 1)^2 - \lambda = 0.
\]
For $n = 5$, it means $\lambda^4 + \lambda^3 + \lambda^2 + \lambda + 1 = 0$, so
that
\[
	\tr(s)^4 -3\tr(s)^2\det(s) + \det(s)^2 = (\lambda + 1)^4 - 3\lambda (\lambda + 1)^2 + \lambda^2 = 0.\qedhere
\]
\end{proof}


\section{Special subgroups}
\label{Sec: Special Subgroups}
	
\begin{convention*}
Throughout this section, $k$ is an algebraically closed field unless otherwise
specified.
\end{convention*}

	
\subsection{Cyclic subgroups}
\label{Sec: Cyclic}

If $s \in \PGL_2(k)$ is nontrivial, then it fixes at least one point of
$\PP^1(k)$, which we may assume is $\infty$ after a suitable conjugation. If
$\infty$ is the only fixed point of $s$, then $s = \mat{1 & \beta \\ & 1}$, and
it generates a cyclic group of order~$p$ (Lemma~\ref{Lem: order p}), where
$\mathrm{char}(k) = p$. Moreover, we see that $\mat{\beta^{-1} & \\ & 1} s
\mat{\beta & \\ & 1} = \mat{1 & 1 \\ & 1}$, and hence every cyclic subgroup of
order $p$ is conjugate to the subgroup $\langle \mat{1 & 1 \\ & 1} \rangle$
inside $\PGL_2(k)$.

Now suppose that $s$ fixes two distinct points of $\PP^1(k)$. After a suitable
conjugation, we may assume that $s$ fixes $0$ and $\infty$. Hence $s =
\mat{\alpha & \\ & 1}$. If $\alpha^m \neq 1$ for any $m \neq 0$, then $s$ has
infinite order and generates a subgroup isomorphic to $\ZZ$. If $\alpha^m = 1$
for some $m > 1$, then $\alpha$ is a root of unity in~$k$. The roots of unity
in~$k$ have order prime to the characteristic.  Hence $s$ generates a
$p$-regular cyclic subgroup of $\PGL_2(k)$.

\begin{proposition}
\label{Prop: Cyclic}
Let $G$ be a nontrivial finite cyclic subgroup of $\PGL_2(k)$. Then exactly one
of the following is true:
\begin{enumerate}
\item $|G| = p$, it fixes a unique point of $\PP^1(k)$, and $G$ is conjugate to
  $\mat{1 & \FF_p \\ & 1}$; or
\item $G$ is $p$-regular, it fixes exactly two points of $\PP^1(k)$, and $G$
  is conjugate to $\mat{\Lambda & \\& 1}$ for some (cyclic) subgroup $\Lambda$
  of the roots of unity of $k$.
\end{enumerate}
In particular, the order of a finite cyclic group uniquely determines its
conjugacy class in $\PGL_2(k)$.
\end{proposition}

\begin{proof}
We have already proved everything but the final statement about conjugacy
classes. If $|G| = p$, then it is conjugate to $\mat{1 & \FF_p \\ & 1}$, so that
any cyclic group of order $p$ lies in the same conjugacy class. If $p \nmid
|G|$, then it is conjugate to $\mat{\Lambda & \\ & 1}$ for some finite cyclic
group $\Lambda \subset k^\times$. The group of roots of unity in $k^\times$
contains a unique cyclic subgroup of every order prime to the
characteristic. That is, the order of $G$ uniquely determines $\Lambda$, and
hence also the conjugacy class of $G$.
\end{proof}
	

Another normal form for $p$-regular cyclic subgroups will be useful when we pass
to non-algebraically closed fields.
	
\begin{corollary}
\label{Cor: Alternate Cyclic}
Let $G$ be a finite $p$-regular cyclic subgroup of $\PGL_2(k)$ of order $n \geq
3$, and let $\zeta$ be a primitive $n$-th root of unity. Then $G$ is conjugate
to the subgroup generated by $\mat{ \lambda + 1 & -1 \\ 1 & 1}$, where $\lambda
= \zeta + \zeta^{-1}$.
\end{corollary}

\begin{proof}
The elements $\mat{ \lambda + 1 & -1 \\ 1 & 1}$ and $\mat{ \zeta & \\ & 1}$ are
conjugate:
\[
\bp 1 & - \zeta^{-1} \\ 1 & - \zeta \ep^{-1} \bp \zeta & \\ & 1 \ep \bp 1 & - \zeta^{-1} \\ 1 & - \zeta \ep
	= \bp \lambda + 1 & -1 	 \\ 1 & 1 \ep. \qedhere
\]
\end{proof}


\subsection{\texorpdfstring{$p$}-subgroups}
\label{Sec: p-groups}
	
In this section, we will assume $\mathrm{char}(k) = p > 0$. Suppose $G \subset
\PGL_2(k)$ is a nontrivial $p$-group, and let $s \in G$ be any nontrivial
element. We saw in Lemma~\ref{Lem: order p} that $s$ must fix a unique point of
$\PP^1(k)$, so after conjugating $G$ if necessary, we may assume that $s$ fixes
$\infty$. We claim that every element of $G$ fixes $\infty$. Suppose not, and
select $s' \in G \smallsetminus\{I, s\}$ that fixes a (unique) point $x \neq
\infty$. After conjugating $G$ by $\mat{1 & x \\ & 1}$, we may assume that $s'$
fixes $0$ and $s$ still fixes $\infty$. So $s = \mat{1 & \beta \\ & 1}$ and $s'
= \mat{1 & \\ \beta' & 1}$ for some $\beta, \beta' \in k^\times$. Since $G$ is a
$p$-group, it follows that $ss'$ must fix a unique point as
well. Lemma~\ref{Lem: order p} shows that
\[
\tr(ss')^2 - 4\det(ss') = \beta \beta' ( 4 + \beta \beta') = 0.
\]
When $p = 2$, this contradicts $\beta \beta' \neq 0$. When $p > 2$, we find that
$s^{-1}s' \in G$ as well, so that
\[
	\tr(s^{-1} s') - 4\det(s^{-1} s') = -\beta \beta' (4 - \beta \beta') = 0.
\]
Adding these last two equations gives a contradiction. Hence every element of
$G$ fixes $\infty$. We have proved the following lemma.


\begin{lemma}
\label{Lem: p-group fixed point}
If $G \subset \PGL_2(k)$ is a nontrivial $p$-group, then $G$ is conjugate to
$\mat{1 &\Gamma \\ & 1}$, where $\Gamma$ is an additive
subgroup of $k$, and hence an $\FF_p$-vector space. In particular, $G$ is
abelian and fixes a unique point of $\PP^1(k)$.
\end{lemma}

 Now suppose that $G$ and $G'$ are two finite $p$-groups in $\PGL_2(k)$. To
 determine necessary and sufficient conditions for these two subgroups to be
 conjugate, it suffices to assume that $G = \mat{1 & \Gamma \\ & 1}$ and $G' =
 \mat{1 & \Gamma' \\ & 1}$ for some additive subgroups $\Gamma$ and $\Gamma'$
 inside $k$. If there exists $u \in \PGL_2(k)$ so that $uGu^{-1} = G'$, then $u$
 must fix $\infty$, so that $u = \mat{\alpha & \beta \\ & 1}$. Hence
 \[
 	uGu^{-1} = \begin{pmatrix}\alpha & \beta \\ & 1\end{pmatrix} 
		\begin{pmatrix}1 & \Gamma \\ & 1 \end{pmatrix}
		\begin{pmatrix}\alpha^{-1} & - \alpha^{-1} \beta \\ & 1 \end{pmatrix}
		= \begin{pmatrix}1 & \alpha \Gamma \\ & 1\end{pmatrix},
 \]
which implies that $\Gamma' = \alpha \Gamma$. This calculation also shows that
it is sufficient that $\alpha\Gamma = \Gamma'$ for some $\alpha \in k^\times$
in order to have $G$ and $G'$ be conjugate.

\begin{proposition}
The conjugacy classes of finite $p$-subgroups of $\PGL_2(k)$ are in bijective
correspondence with the finite additive subgroups of $k$ modulo homotheties.
\end{proposition}

To close this section, we define the \textbf{stabilizer}\footnote{The
  multiplicative group $k^\times$ acts on the set of all finite additive
  subgroups of $k$, and the stabilizer (in the usual sense of a group action) of
  a particular such subgroup $\Gamma$ is precisely $\FF_\Gamma^\times$. Dickson
  prefers to call $\FF_\Gamma$ the \textbf{multiplier}
  \cite[\S70]{Dickson_Linear_Groups_1901}.} of a finite additive subgroup
$\Gamma \subset k$ to be
\[
\FF_\Gamma = \{\alpha \in k : \alpha \Gamma \subset \Gamma\}.
\]
Then one verifies immediately that $\FF_\Gamma$ is a subfield of $k$. Moreover, each
nonzero element of $\FF_\Gamma$ induces an $\FF_p$-linear automorphism of
$\Gamma$, and since $\Gamma$ is finite, there are only finitely many
possibilities in total for such an automorphism. Hence $\FF_\Gamma$ is a finite
subfield of $k$. Observe further that for any $\alpha \in k^\times$, we have
$\FF_{\alpha \Gamma} = \FF_\Gamma$, so that the stabilizer is a homothety class
invariant.




\subsection{Subgroups stabilizing a pair of points}
\label{Sec: Stable pair of points}
	
Let $G$ be a finite subgroup of $\PGL_2(k)$ that stabilizes a pair of points,
but does not fix them. After conjugation, we may assume that $G$ stabilizes
$\{0, \infty\}$. Define
\[
	H = \{s \in G : s.\infty = \infty \text{ and } s.0 = 0\}.
\]
We saw in \S\ref{Sec: Cyclic} that $H$ is cyclic and generated by an element
$\mat{\lambda & \\ & 1}$. We observe that $H$ is normal in $G$. Indeed, it is
invariant under conjugation by any element fixing both $0$ and $\infty$ as these
are precisely the elements of $H$. Any element of $G$ that stabilizes the set
$\{0, \infty\}$ without fixing it pointwise must be of the form $t = \mat{ &
  \tau \\ 1 }$ for some $\tau \in k^\times$. We find that
\[
t\begin{pmatrix}\lambda & \\ & 1\end{pmatrix}t^{-1} = \begin{pmatrix}& \tau \\ 1 \end{pmatrix}
	\begin{pmatrix} \lambda & \\ & 1\end{pmatrix}\begin{pmatrix} & \tau \\ 1\end{pmatrix}
	= \begin{pmatrix} \lambda^{-1} & \\ & 1 \end{pmatrix}.
\]
Hence $H$ is normal. In fact, this computation also shows that the subgroup of
$G$ generated by $H$ and $t$ is dihedral.

We now prove that $G$ is generated by $H$ and $t$. We may conjugate $G$ by
$\mat{\sqrt{\tau^{-1}} & \\ & 1}$ in order to assume that $t = \mat{ & 1 \\ 1
  &}$; note that this operation does not affect~$H$.  We have already shown that
$G \smallsetminus H$ consists of elements of the form $\mat{ & \tau \\ 1
  &}$. Suppose we have such an element. Then $\mat{ & \tau \\ 1 &} \mat{ & 1
  \\ 1 &} = \mat{\tau & \\ & 1} \in H$. If $\lambda$ has order $n = |H|$, then
$\tau^n = 1$, which means there are at most $n$ such elements in $G$. On the
other hand, we can generate $n$ elements of this shape via
\[
	\bp \lambda^j & \\ & 1 \ep \bp & 1 \\ 1 & \ep = \bp & \lambda^j \\ 1 & \ep, \quad j = 1, \ldots, n.
\]
Hence $G$ is generated by $H$ and $t$ as desired.

\begin{proposition}
  \label{prop:dihedral}
Let $G \subset \PGL_2(k)$ be a finite subgroup that stabilizes a pair of points
of $\PP^1(k)$. Then up to $\PGL_2(k)$-conjugacy, $G$ satisfies one of the
following:
\begin{enumerate}
\item $G = \mat{ \Lambda & \\ & 1}$ for some cyclic subgroup $\Lambda \subset
  k^\times$; in this case, $G$ fixes a pair of points of $\PP^1(k)$.
\item $G = \mat{ \Lambda & \\ & 1} \rtimes \langle \mat{ & 1 \\ 1 & } \rangle$
  for some cyclic subgroup $\Lambda \subset k^\times$; in this case, $G$
  stabilizes a pair of points, but does not fix them.
\end{enumerate}
In either case, we observe that $G$ is uniquely determined up to
$\PGL_2(k)$-conjugacy by its order and whether it fixes a pair of points.
\end{proposition}

Now let us suppose that $G$ is a finite subgroup of $\PGL_2(k)$, and let $H$ be
a nontrivial maximal $p$-regular cyclic subgroup. We showed in
Proposition~\ref{Prop: Cyclic} that $H$ fixes a pair of points $\{x,y\} \subset
\PP^1(k)$. If $s \in G$ lies in the normalizer of $H$, then for each nontrivial
$h \in H$, there is $h' \in H$ such that $shs^{-1} = h'$. Observe that
\[
	h's.x = sh.x = s.x \qquad h's.y = sh.y = s.y,
\] 
so that $s.x$ and $s.y$ are fixed points of $h'$. This means $s$ stabilizes the
pair $\{x,y\}$. If $s$ fixes both of these points, then $s \in H$ by maximality;
otherwise, $s$ swaps $x$ and $y$. It follows that the normalizer $N_G(H)$
consists of the  elements of $G$ that stabilize the pair of points $\{x,y\}$, so
that our above work proves $N_G(H) = H$ or $N_G(H)$ is dihedral with maximal
cyclic subgroup $H$ (of index~2).

\begin{proposition}
\label{Prop: Dihedral normalizer}
Let $G$ be a finite subgroup of $\PGL_2(k)$, and let $H$ be a nontrivial maximal
$p$-regular cyclic subgroup. Then the normalizer of $H$ satisfies $[N_G(H):H ] =
1$ or $2$. In the latter case, $N_G(H)$ is dihedral.
\end{proposition}


\subsection{Subgroups fixing a unique point}
\label{Sec: Borel}

The goal of this section is to prove the following result:
	
\begin{proposition}
\label{Prop: Borel Normalization}
Suppose $G$ is a finite subgroup of $\PGL_2(k)$ that fixes a unique point of
$\PP^1(k)$. Then $k$ has positive characteristic $p$, and up to conjugation in
$\PGL_2(k)$, there exist a nontrivial additive subgroup $\Gamma \subset k$ and a
positive integer $n$ coprime to $p$ satisfying $\mu_n(k) \subset
\FF_\Gamma^\times$ and
\[
G = \begin{pmatrix} \mu_n(k) & \Gamma \\ & 1 \end{pmatrix}
  = \begin{pmatrix} 1 & \Gamma \\ & 1 \end{pmatrix} \rtimes 
	\begin{pmatrix} \mu_n(k) & \\ & 1 \end{pmatrix}.
\]
The group $G$ is determined up to $\PGL_2(k)$-conjugation by $n$ and the
homothety class $\{\alpha\Gamma : \alpha \in k^\times\}$.
\end{proposition}

\begin{remark}
\label{Rem: Cyclotomic Field}
With the notation of the proposition, let $e$ be the order of $p$ in $(\ZZ /
n\ZZ)^\times$. Then $\FF_{p^e}$ is the smallest extension of $\FF_p$ containing
$\mu_n(k)$, and hence $\FF_{p^e} \subset \FF_\Gamma$.
\end{remark}

\begin{corollary}
If $G$ is a finite subgroup of $\PGL_2(k)$, then $G$ fixes a unique point of
$\PP^1(k)$ if and only if $G$ is $p$-semi-elementary with nontrivial Sylow
$p$-subgroup, where $p = \mathrm{char}(k) > 0$.
\end{corollary}

\begin{proof}
One implication follows immediately from the preceding proposition. For the
other, $G$ is $p$-semi-elementary if and only if it fits into an exact sequence
$1 \to P \to G \to G / P \to 1$ with $P$ a $p$-group and $G / P$ cyclic of order
prime to $p$. We saw in \S\ref{Sec: p-groups} that $P$ must fix a unique point
of $\PP^1(k)$, and so must any group that normalizes it.
\end{proof}



Before proving the proposition, we discuss Borel and unipotent subgroups. Write
$B(k)$ for the \textbf{standard Borel subgroup} of $\PGL_2(k)$:
\[
B(k) = \{s \in \PGL_2(k) : s.\infty = \infty\} = \left\{ \begin{pmatrix} \alpha & \beta \\ & 1 \end{pmatrix} : 
	\alpha \in k^\times, \beta \in k \right\}.
\]
Write $U(k)$ for the unipotent subgroup of $B(k)$ --- i.e., those
elements with $\alpha = 1$. Note that $U(k)$ is an abelian group, and it can be
written concretely as $U(k) = \mat{1 & k \\ & 1}$. Moreover, an immediate
calculation shows that $U(k)$ is a normal subgroup of $B(k)$. 

More generally, any subgroup conjugate to $B(k)$ will be called a Borel
subgroup. Equivalently, a Borel subgroup may be characterized as the set of all
elements of $\PGL_2(k)$ fixing a particular point of $\PP^1(k)$.

There is an exact sequence of homomorphisms
\[
	1 \rightarrow U(k) \rightarrow B(k) \stackrel{\pi}{\rightarrow} k^\times \to 1,
\]
where $\pi$ maps an element $s = \mat{ \alpha & \beta \\ & 1}$ to the derivative
of $s.z = \alpha z + \beta$. (Note that this is well defined independent of the
matrix representation of $s$.) For any subgroup $G \subset \PGL_2(k)$, we write
$B_G = B(k) \cap G$ and $U_G = U(k) \cap G = \mat{1 & \Gamma \\ & 1}$ for some
additive subgroup $\Gamma \subset k$. Then the above exact sequence descends to
an exact sequence
\[
	1 \rightarrow U_G \rightarrow B_G \stackrel{\pi}{\rightarrow} \pi(B_G) \to 1.
\]
If $\pi(B_G)$ is cyclic, generated by $\lambda \in k^\times$, then there exists
$s = \mat{\lambda & \eta \\ & 1} \in B_G$ for some $\eta \in k$. The subgroup
generated by $s$ is isomorphic to $\pi(B_G)$, so we may represent $B_G$ as
a semidirect product:
\[
	B_G = \begin{pmatrix} 1 & \Gamma \\ & 1 \end{pmatrix} \rtimes 
	\langle s \rangle.
\]
This is the case, for example, if $G$ is a finite group, so that $\pi(B_G)
\subset \mu_n(k)$ for some $n \geq 1$. Note that if $s$ is nontrivial, then
$\lambda \neq 1$.

\begin{proof}[Proof of Proposition~\ref{Prop: Borel Normalization}]
Suppose now that $G$ is a finite subgroup of $\PGL_2(k)$ that fixes a unique
point of $\PP^1(k)$. As usual, we may assume that $G$ fixes $\infty$ after a
suitable conjugation, so that $G \subset B(k)$. Moreover, let us write $U_G =
\mat{1 & \Gamma \\ & 1}$ and choose an element $s$ as above so that $G = \mat{1
  & \Gamma \\ & 1} \rtimes \langle s \rangle$. Now $s$ fixes $\infty$ and at
least one other point of $\PP^1(k)$.  Note that $\Gamma \neq 0$, else $G$ fixes
at least two points. In particular, the characteristic of $k$ is positive. After
conjugating $G$ by an element of $U(k)$, we may assume that $s$ fixes 0, so that
it is of the form $s = \mat{\lambda & \\ & 1}$. That is, $\langle \lambda
\rangle = \mu_n(k)$ for some $n$ coprime to $p$, and $G = \mat{ 1 & \Gamma \\ &
  1 } \rtimes \mat{\mu_n(k) & \\ & 1} = \mat{ \mu_n(k) & \Gamma \\ & 1}$.

Now observe that, with $\lambda$ as above and $\gamma \in \Gamma$, we have
\[
	\bp \lambda &  \\ & 1 \ep
	\bp 1 & \gamma \\ & 1 \ep
	\bp \lambda^{-1} & \\ & 1 \ep =
	\bp 1 & \lambda \gamma \\ & 1 \ep \in G,
\]
so that $\lambda\Gamma \subset \Gamma$. That is, $\mu_n(k) \subset
\FF_\Gamma^\times$.

Finally we must deal with the question of conjugacy of these subgroups. Let $G,
G' \subset \PGL_2(k)$ be finite subgroups of the following form:
\[
	G = \begin{pmatrix} 1 & \Gamma \\ & 1 \end{pmatrix} \rtimes 
		\begin{pmatrix} \mu_n(k) & \\ & 1 \end{pmatrix}  \qquad
	G' = \begin{pmatrix} 1 & \Gamma' \\ & 1 \end{pmatrix} \rtimes 
		\begin{pmatrix} \mu_{n'}(k) & \\ & 1 \end{pmatrix} ,
\]
where $n, n' \in \NN \smallsetminus p\NN$ and $\Gamma, \Gamma' \subset k$ are
finite nontrivial additive subgroups. Suppose first that there is $s \in
\PGL_2(k)$ such that $sGs^{-1} = G'$. Then $s$ must fix $\infty$, so that $s =
\mat{ \alpha & \beta \\ & 1}$.  Since $sU_Gs^{-1} = U_{G'}$, our work in
\S\ref{Sec: p-groups} shows that $\Gamma' = \alpha \Gamma$. Comparing the orders
of $G$ and $G'$ shows $n = n'$.

Conversely, suppose that $G$ and $G'$ are as above, that $\Gamma' = \alpha
\Gamma$, and $n = n'$. For $\gamma \in \Gamma$ and $\lambda \in \mu_n(k)$, we
have
\[
\begin{pmatrix} \alpha & \\ & 1 \end{pmatrix} \begin{pmatrix} \lambda & \gamma \\ & 1 \end{pmatrix}
	\begin{pmatrix} \alpha^{-1} & \\ & 1 \end{pmatrix} 
= \begin{pmatrix} \lambda & \alpha \gamma \\ & 1 \end{pmatrix} \in G'.
\]
As $G$ and $G'$ have the same order, it follows that $G$ and $G'$ are
conjugate. 
\end{proof}

When we deal with non-algebraically closed fields later, it will be useful to
have a description of $2$-elementary subgroups that fix a point other than
infinity.

\begin{proposition}
  \label{prop:alternate-elementary}
  Suppose that $k$ has characteristic~2. Let $\tau \in \PP^1(k) \smallsetminus
  \infty$. The subgroup of $\PGL_2(k)$ given by
  \[
  \Omega(\tau) := \{I\} \cup \left\{
  \begin{pmatrix}  \alpha & \tau^2 \\ 1 & \alpha 
  \end{pmatrix} \ : \ \alpha \in k \smallsetminus \{\tau\}\right\}  
  \]
  fixes $\tau$, and any element of $\PGL_2(k)$ with unique fixed point $\tau$
  lies in $\Omega(\tau)$.
\end{proposition}

\begin{proof}
  It is immediate that $\Omega(\tau)$ fixes $\tau$. If $s = \mat{\alpha & \beta
    \\ \gamma & \delta}$ is a nontrivial element of $\PGL_2(k)$ with unique
  fixed point $\tau$, then $s$ has order~2 and trace zero (Lemma~\ref{Lem: order
    p}). It follows that $\delta = \alpha$. As $s$ does not fix $\infty$, we
  must have $\gamma \ne 0$. Without loss, set $\gamma = 1$. The fixed point of
  $s = \mat{\alpha & \beta \\ 1 & \alpha}$ is $\sqrt{\beta} = \tau$. Hence,
  $\beta = \tau^2$, as desired. 
\end{proof}


\subsection{Tetrahedral subgroups}
\label{Sec: Tetrahedral}

Recall that the group of orientation-preserving symmetries of a regular
tetrahedron is isomorphic to $\mathfrak{A}_4$. (Look at the action of the
symmetry group on the four vertices of the tetrahedron.)  Any group isomorphic
to $\mathfrak{A}_4$ will therefore be called \textbf{tetrahedral}.

\begin{lemma}
\label{Lem: Tetrahedral}
Let $G$ be a non-abelian group of order $12$ possessing a normal Klein
4-subgroup. Then $G$ is tetrahedral.
\end{lemma}

\begin{proof}
Let $N = \{e, n_1, n_2, n_3\}$ be the given normal subgroup, let $h \in G$ be an
element of order~3, and let $H = \langle h \rangle$. Then $N \cap H = \{e\}$ and
$NH = G$. We observe that $hNh^{-1} = N$, so that $hn_ih^{-1} = n_j$ for some
$j$. If conjugation by $h$ fixes all $n_i$, then we would find that $G$ is
abelian. If conjugation by $h$ fixed only one $n_i$ and permuted the other~2,
then $h$ would have order~2. So $h$ permutes the $n_i$ cyclically. Hence $G = N
\rtimes H$, and the action $H \to \Aut(N)$ is given by cyclic permutation on the
three nontrivial elements of $N$. One now checks that $\mathfrak{A}_4$ may also
be written as a semidirect product of the normal subgroup $\{e, (12)(34),
(13)(24), (14)(23)\}$ (containing all elements of order 2) and the subgroup
generated by $(123)$; hence, it is isomorphic to $G$.
\end{proof}


\begin{proposition}
\label{Prop: Tetrahedral 2}
If $p = 2$ and $G$ is a tetrahedral subgroup of $\PGL_2(k)$, then $G$ is
conjugate to the standard Borel subgroup $B(\FF_4) = \mat{1 & \FF_4 \\ & 1}
\rtimes \mat{\FF_4^\times & \\ & 1}$.
\end{proposition}

\begin{proof}
We know $G$ contains a normal 4-group $N$, each nontrivial element of which must
be unipotent since $p = 2$. After conjugating $G$ if needed, we may assume that
$N = \mat{1 & \Gamma \\ & 1}$ for some $\Gamma \subset k$ of rank~2
(Lemma~\ref{Lem: p-group fixed point}). By normality, if $s \in G$ and $u \in
N$, then there is $u' \in N$ such that $su = u's$. Since $\infty$ is the unique
fixed point of each nontrivial element of $N$, we have
\[
s.\infty = s(u.\infty) = u'(s.\infty) \ \Rightarrow \ s.\infty = \infty.
\]
That is, $G$ fixes $\infty$, or equivalently $G \subset B(k)$.

By Proposition~\ref{Prop: Borel Normalization}, we may assume $G = \mat{1 &
  \Gamma \\ & 1} \rtimes \mat{\mu_3(k) & \\ & 1}$, where $\Gamma$ is an additive
subgroup of $k$ of order~4 that is stable under multiplication by $\mu_3(k) =
\FF_4^\times$. For $\gamma \in \Gamma \smallsetminus \{0\}$, observe that
\[
	\bp  \gamma^{-1} & \\ & 1\ep  \bp 1 &\Gamma \\ & 1 \ep \bp \gamma & \\ & 1 \ep 
		= \bp 1 & \gamma^{-1} \Gamma \\ & 1 \ep. 
\]
So after an appropriate conjugation, we may assume that $1 \in \Gamma$. Since
$\Gamma$ is stable under multiplication by elements of $\FF_4^\times$, we
conclude that $\Gamma = \FF_4$.
\end{proof}

\begin{proposition}
\label{Prop: Tetrahedral p}
Suppose $p \ne 2$ and let $G$ be a tetrahedral subgroup of $\PGL_2(k)$. Then $G$
is conjugate to the semidirect product $N \rtimes C$, where $N = \left\{ \mat{
  \pm 1 & \\ & 1 }, \mat{ & \pm 1\\ 1 & } \right\}$, and $C$ is the cyclic group
of order~$3$ generated by $\mat{ 1 & i \\ \\ 1 & -i}$, where $i$ is any choice
of primitive fourth root of unity. In particular, any two tetrahedral subgroups
of $\PGL_2(k)$ are conjugate when $\mathrm{char}(k)$ is different from~2.
\end{proposition}


\begin{proof}
A tetrahedral group contains a normal 4-group $N$. Let $s_1$ be a nontrivial
element of $N$, and let us conjugate $G$ so that $s_1$ fixes $0$ and
$\infty$. (Here we have used the hypothesis $p \neq 2$.) If $s_2 \in N$ is
another element of order~2, then it must commute with $s_1$, so that
\[
s_1s_2.0 = s_2s_1.0 = s_2.0 \qquad s_1s_2.\infty = s_2s_1.\infty = s_2.\infty.
\]
Hence $s_2$ stabilizes the set $\{0, \infty\}$, and it cannot fix these elements
since $s_1$ is the only element of order~2 with this property. Repeating this
argument for the third nontrivial element of $N$ shows that there exist $\tau
\neq \tau' \in k^\times$ such that
\[
N = \left\{I, \begin{pmatrix} -1 & \\ & 1 \end{pmatrix},
	\begin{pmatrix}  & \tau\\ 1&  \end{pmatrix},
	\begin{pmatrix}  & \tau'\\ 1 & \end{pmatrix} \right\}.
\]
After conjugating by $\mat{\sqrt{\tau^{-1}} & \\ & 1}$, we may assume that $\tau
= 1$. Since $N$ is abelian, we have
\[
	\bp 1 & \\ & \tau' \ep = 
	\bp & 1 \\ 1 & \ep 
	\bp & \tau' \\ 1 & \ep =
	\bp & \tau' \\ 1 & \ep
	\bp & 1 \\ 1 & \ep  = 
	\bp \tau' &  \\  & 1\ep .
\]
Hence $(\tau')^2 = 1$, or $\tau' = -1$, and $N$ is of the form claimed in the
statement of the proposition.

The group $G$ has normal subgroup $N$ and four conjugate subgroups of
order~3. In particular, every element of $G \smallsetminus N$ has order~3. We
now compute the set of all elements of $\PGL_2(k)$ of order~3 that normalize
$N$. Suppose $s = \mat{\alpha & \beta \\ \gamma & \delta}$ is such an
element. Then
\[
\bp \alpha & \beta \\ \gamma & \delta \ep
\bp -1 & \\ & 1 \ep
\bp \delta & - \beta \\ - \gamma & \alpha \ep
= \bp -(\alpha \delta + \beta \gamma) & 2\alpha \beta \\ -2\gamma \delta & \alpha \delta + \beta \gamma \ep.
\]
For $s$ to be a normalizing element, we must have either
\begin{equation}
\label{Eq: Case 1}
	\alpha \beta = \gamma\delta = 0, \text{ or}
\end{equation}
\begin{equation}
\label{Eq: Case 2}
	\alpha \delta = - \beta \gamma \quad \text{and} \quad \alpha \beta = 
	\pm \gamma \delta \quad \text{and} \quad \alpha \beta \gamma \delta \neq 0.
\end{equation}
Let us suppose first that \eqref{Eq: Case 1} holds. Then either $\alpha = \delta
= 0$ or $\beta = \gamma = 0$. In the former case, $s$ has order~3 if and only if
$\tr(s)^2 - \det(s) = - \det(s) = 0$, so that this cannot occur (Lemma~\ref{Lem:
  order n}). In the latter case, we may assume $\delta = 1$, so that $s$ has
order~3 precisely when $\alpha^3 = 1$ and $\alpha \neq 1$. But observe that
\[
	\bp \alpha & \\ & 1 \ep 
	\bp & 1 \\ 1 & \ep 
	\bp \alpha^{-1} & \\ & 1\ep =
	\bp & \alpha^2 \\ 1 & \ep \not\in N,
\]
so that the full subgroup $N$ is not stable under conjugation by $s$. Hence
\eqref{Eq: Case 1} may discarded.

Let us now suppose that \eqref{Eq: Case 2} holds, so that $\beta \gamma = -
\alpha \delta$. By Lemma~\ref{Lem: order n}, if $s$ has order 3, then
\[
0 = \tr(s)^2 - \det(s) = (\alpha + \delta)^2 - \alpha \delta + \beta \gamma = \alpha^2 + \delta^2.
\]
Hence $\delta = \pm i \alpha$. Squaring both sides of the second equation in
\eqref{Eq: Case 2} and dividing by $\alpha^2 = -\delta^2$, we see that $\beta^2
= - \gamma^2$. Without loss of generality, we may suppose that $\gamma = 1$, so
that $\beta = \pm i$. Squaring both sides of the first equation in \eqref{Eq:
  Case 2} and replacing $\delta^2$ with $- \alpha^2$ and $\beta^2$ with
$-\gamma^2 = -1$, we find that $\alpha^4 = 1$. We conclude that $s =
\mat{\varepsilon_1 & \beta \\ 1 & \varepsilon_2}$, where $\varepsilon_j^4 = 1$
for $j = 1, 2$ and $\beta = \pm i$. In order for the first equation of
\eqref{Eq: Case 2} to be satisfied, we must have $\beta = - \varepsilon_1
\varepsilon _ 2$, so that exactly one of $\varepsilon_1$ and $\varepsilon_2$ is
a primitive fourth root of unity, while the other is $\pm 1$. Hence the elements
of order 3 that normalize $N$ lie in the following set:
\[
\left\{ \bp \varepsilon  & - \varepsilon \varepsilon ' i \\ 1 & \varepsilon' i \ep 
	 \ : \ \varepsilon^2 = (\varepsilon')^2 = 1 \right\} \cup
\left\{ \bp \varepsilon i & - \varepsilon \varepsilon ' i \\ 1 & \varepsilon' \ep
	 \ : \ \varepsilon^2 = (\varepsilon')^2 = 1 \right\}.
\]
As there are 8 elements in this set, and since a tetrahedral group has 8
elements of order 3, we have found all of them.

Let $s \in G$ be any element of order~3. Evidently $N \cup Ns \cup Ns^2 = G$, so
that $G = N \rtimes \langle s \rangle$. Now choose $\varepsilon = 1$ and $\varepsilon'
= -1$ in the first of the above sets of elements of order 3 to arrive at the
desired generator~$G$.
\end{proof}

\begin{corollary}
  \label{cor: tetrahedral}
If $p = 3$ and $G \subset \PGL_2(k)$ is tetrahedral, then $G$ is conjugate to
$\PSL_2(\FF_3)$.
\end{corollary}

\begin{proof}
The group $\PSL_2(\FF_3)$ acts faithfully on the set $\PP^1(\FF_3)$, which has
four points. This gives an injective homomorphism $\PSL_2(\FF_3) \to
\mathfrak{S}_4$. By comparing orders, we see that the image in $\mathfrak{S}_4$
has index~2, which means $\PSL_2(\FF_3) \cong \mathfrak{A}_4$. Thus
$\PSL_2(\FF_3)$ is tetrahedral, and the preceding proposition shows $G$ and
$\PSL_2(\FF_3)$ must be conjugate.
\end{proof}


\subsection{Octahedral subgroups}
\label{Sec: Octahedral}

Recall that the group of orientation-preserving symmetries of a regular
octahedron is isomorphic to $\mathfrak{S}_4$. (Look at the action of the
symmetry group on the set of pairs of opposite faces, of which there are four.)
Any group isomorphic to $\mathfrak{S}_4$ will therefore be called
\textbf{octahedral}.

\begin{lemma}
\label{Lem: Octahedral}
Let $G$ be a group of order~$24$ such that (a) $G$ has no central element of
order~$2$, and (b) $G$ has exactly~$4$ conjugate cyclic subgroups of
order~$3$, each of which has normalizer equal to a dihedral subgroup of
order~$6$. Then $G$ is octahedral.
\end{lemma} 

\begin{proof}
Let $C_1, \ldots, C_4$ be the four conjugate cyclic subgroups of order~3, and
let $D_1, \ldots, D_4$ be the associated dihedral normalizers. Note that the
$D_i$ must also be conjugate. We claim that $D_1 \cap \cdots \cap D_4 =
\{e\}$. If it contains an element $g$ of order 3, then $g$ belongs to each of
the $C_i$, and hence the $C_i$ are not distinct, a contradiction. If the
intersection contains a pair of distinct elements of order~2, then it contains
their product, which has order~3, another contradiction. If the intersection
contains a unique element of order~2, say $g$, then $sgs^{-1}$ lies in the
intersection as well for every $s \in G$. Hence $sgs^{-1} = g$, or $g$ lies in
the center of $G$, a final contradiction. Thus $D_1 \cap \cdots \cap D_4$ is
trivial.

Consider the action of $G$ on the set $\{C_1, \ldots, C_4\}$ given by
conjugation; it induces a homomorphism $\phi: G \to \mathfrak{S}_4$. Suppose
that $g$ lies in the kernel of $\phi$. Then $gC_ig^{-1} = C_i$ for each $i$, so
that $g$ belongs to each normalizer $D_i$. We showed above that the intersection
of the normalizers is trivial, so $g = e$. We deduce that $\phi$ is injective,
and that $G \cong \mathfrak{S}_4$.
\end{proof}

\begin{proposition}
Suppose $p \ne 2$ and $G \subset \PGL_2(k)$ is an octahedral subgroup. Then up
to conjugation, $G$ is generated by the tetrahedral subgroup $T = N \rtimes C$
given by Proposition~\ref{Prop: Tetrahedral p} and the element $\mat{i & \\ &
  1}$, where $i$ is any primitive fourth root of unity. In particular, any two
octahedral subgroups of $\PGL_2(k)$ are conjugate when $\mathrm{char}(k)$ is
different from~2.
\end{proposition}

\begin{remark}
\label{Rem: No Octahedral}
When $p = 2$, we know that every element of finite order in $\PGL_2(k)$ has
order~2 or odd order (Proposition~\ref{Prop: Cyclic}). It follows that
$\PGL_2(k)$ does not contain an octahedral subgroup since such groups have
elements of order~4.
\end{remark}

\begin{proof}
Evidently $G$ contains a tetrahedral subgroup $T$, so after conjugation, we may
assume $T = N \rtimes C$ as in Proposition~\ref{Prop: Tetrahedral p}. Since $[G
  : T] = 2$, we know $G$ is generated by $T$ and any element $s \in G$ of
order~4. Then $s^2$ has order~2 and corresponds to an even permutation in
$\mathfrak{S}_4$, so that it lies in $N$. The three nontrivial elements of $N$
are conjugate via elements of $C$, so we may assume that $s^2 = \mat{-1 & \\ &
  1}$. Now $s$ fixes two points of $\PP^1(k)$, and its square fixes the same two
points. Hence $s = \mat{\varepsilon & \\ & 1}$ for some $\varepsilon \in
k^\times$. Since $s$ has order~4, we must have $\varepsilon = \pm i$. Replacing
$s$ with $s^{-1}$ if necessary, we find that $\varepsilon = i$.
\end{proof}

\begin{corollary}
  \label{cor:octahedral3}
If $p = 3$ and $G \subset \PGL_2(k)$ is octahedral, then $G$ is conjugate to
$\PGL_2(\FF_3)$.
\end{corollary}

\begin{proof}
The group $\PGL_2(\FF_3)$ acts faithfully on the set $\PP^1(\FF_3)$, which has
four points. This gives an injective homomorphism $\PGL_2(\FF_3) \to
\mathfrak{S}_4$. As these groups have the same order, they must be
isomorphic. Thus $\PGL_2(\FF_3)$ is octahedral, and the preceding proposition
shows $G$ and $\PGL_2(\FF_3)$ must be conjugate.
\end{proof}


\subsection{Icosahedral subgroups}
\label{Sec: Icosahedron}

Recall that the group of orientation-preserving symmetries of a regular
icosahedron is isomorphic to $\mathfrak{A}_5$. (See
\cite[\S3$\cdot$6--3$\cdot$7]{Coxeter_Regular_Polytopes}.)
Any group isomorphic to $\mathfrak{A}_5$ will therefore be called \textbf{icosahedral}.

\begin{lemma}
\label{Lem: Icosahedral}
Let $G$ be a group of order~$60$ with exactly ten conjugate $3$-subgroups and
exactly fifteen elements of order~$2$ lying in five conjugate Klein
4-groups. Then $G$ is icosahedral.
\end{lemma}

\begin{proof}
Let $K_1, \ldots, K_5$ be the conjugate Klein 4-subgroups. We let $G$ act on the
set $\{K_1, \ldots, K_5\}$ by conjugation, so that we have a homomorphism $\phi:
G \to \mathfrak{S}_5$. If we can show that $G$ is injective, then it is isomorphic to an
index~2 subgroup of $\mathfrak{S}_5$, which must be $\mathfrak{A}_5$.
	
First note that if $N_i$ is the normalizer of $K_i$ in $G$, then $|N_i| = 12$ by
the orbit-stabilizer theorem. Each $N_i$ is tetrahedral by Lemma~\ref{Lem:
  Tetrahedral}. Indeed, it suffices to show that $N_i$ is non-abelian. But if it
were abelian, then it would contain a normal subgroup $C_i$ of order~3, which
would be one of at most five conjugate Sylow 3-subgroups of $G$. But $G$ has ten
conjugate 3-subgroups, a contradiction.
	
To show that $\phi$ is injective, we must prove that $N_1 \cap \cdots \cap N_5 =
\{e\}$.  Write $N$ for this intersection. Then $N$ is a normal subgroup of $G$,
and hence of each $N_i$. Now $N_i$ is tetrahedral, so its only normal subgroups
are its trivial subgroups and $K_i$. No two of the $N_i$ are equal since they
contain conjugate subgroups $K_i$; hence $N \neq N_i$ for any $i$. The $K_i$
have only the identity in common as they contain all fifteen of the elements of
$G$ of order 2. We conclude that $N \neq K_i$ for any $i$. So $N = \{e\}$.
\end{proof}

\begin{lemma}
\label{Lem: Icosahedral presentation}
An icosahedral group $G$ can be generated by two elements $g,h$ subject to the
relations $g^5 = h^2 = (gh)^3 = 1$.
\end{lemma}

\begin{proof}
We may assume $G = \mathfrak{A}_5$. Let $g = (12345)$ and $h = (12)(34)$. Then
$gh = (135)$ has order 3. Let $H = \langle g, h \rangle \subset G$. Evidently
$g$ and $h$ have the correct relations, so it suffices to prove that $|H| =
|G|$. By Cauchy, $H$ contains subgroups of order 3 and 5. We now show that $H$
has a subgroup of order 4, so that $|H|$ is divisible by $4 \cdot 3 \cdot 5 =
60$. We have the following relations:
\begin{eqnarray*}
	g^{-1}hg &=& (15)(23) \\
	  (ghg^{-1}) h (ghg^{-1}) &=&  (13)(25)
\end{eqnarray*}
These two products of 2-cycles generate a Klein 4-subgroup of $H$, which
completes the proof.
\end{proof}

\begin{proposition}
\label{Prop: Icosahedral conjugacy}
Suppose $p \neq 5$ and $G \subset \PGL_2(k)$ is icosahedral. For any primitive
fifth root of unity $\zeta$, the group $G$ is conjugate to the group $\langle s,
t \rangle$, where $s = \mat{\zeta & \\ & 1}$ and $t = \mat{1 & \ \ 1 - \zeta -
  \zeta^{-1} \\ 1 & -1}$.  These generators satisfy $s^5 = t^2 = (st)^3 = I$. In
particular, any two icosahedral subgroups of $\PGL_2(k)$ are conjugate when
$\mathrm{char}(k) \neq 5$.
\end{proposition}

\begin{proof}
We begin by showing that $G$ is conjugate to a subgroup of the sort given in the
proposition for \textit{some} primitive fifth root of unity $\zeta$; afterward,
we will show that we may specify $\zeta$. Let $s, t \in G$ be generators as in
Lemma~\ref{Lem: Icosahedral presentation}; i.e., $s$ has order~5, $t$ has
order~2, and $(st)^3 = I$. Since $p \neq 5$, $s$ must fix two elements of
$\PP^1(k)$. After conjugation, we may assume that $s = \mat{ \zeta & \\ & 1}$
with $\zeta$ some primitive fifth root of unity.
	
Since $t$ has order~2, it may be written as $t = \mat{\alpha & \beta \\ \gamma &
  -\alpha}$ (Lemma~\ref{Lem: order n}). Now
\[
st = \bp \zeta & \\ & 1 \ep
	\bp \alpha & \beta \\ \gamma & -\alpha \ep
	= \bp \zeta \alpha & \zeta \beta \\ \gamma & - \alpha \ep.
\]
The condition for $st$ to have order 3 is
\[
0 = \tr(st)^2 - \det(st) = \zeta^2 \alpha^2  + \zeta (\gamma \beta - \alpha^2) + \alpha^2.
\]
If $\alpha = 0$, then this implies $\det(t) = 0$. So we may assume that $\alpha
= 1$. Now the previous equation becomes
\begin{equation}
\label{Eq: beta gamma}
\beta \gamma = - \frac{1}{\zeta}(\zeta^2 - \zeta + 1) 
	= - \frac{\zeta^3 + 1}{\zeta(\zeta + 1)}.
\end{equation}
As $\zeta$ is a primitive fifth root of unity, we find $\beta \gamma \neq 0$. If
we conjugate $G$ by $\mat{\gamma & \\ & 1}$, then the subgroup generated by
$\mat{\zeta & \\ & 1}$ is unaffected, while
\[
\bp \gamma &  \\ & 1 \ep
\bp 1 & \beta \\ \gamma & -1 \ep
\bp \gamma^{-1} & \\ & 1 \ep = 
\bp 1 & \gamma \beta \\ 1 & -1 \ep.
\]
So without loss of generality, we may assume that $\gamma = 1$. From \eqref{Eq:
  beta gamma}, we find that
\[
t =  \bp 1 & - (\zeta^2 - \zeta + 1) / \zeta \\  1 & -1 \ep
= \bp  1  & \ \ 1 - \zeta - \zeta^{-1}  \\  1 & -1\ep.
\]
By construction, $s^5 = t^2 = (st)^3 = I$.

It remains to show that different fifth roots of unity give rise to conjugate
subgroups of $\PGL_2(k)$. For each $i = 1, 2, 3, 4$, let
\[
	s_i = \bp \zeta^i & \\ & 1\ep, \quad t_i = \bp 1 & 1 - \zeta^i - \zeta^{-i} \\ 1 & -1 \ep, 
	\quad G_i = \langle s_i, t_i \rangle.
\]
Evidently the symmetry $i \mapsto -i$ in $t_i$ shows $G_1 = G_4$ and $G_2 =
G_3$; in general, there are no further equalities among the $G_i$.
If we let $\lambda = \zeta^3 - \zeta^2 + \zeta$, then a direct calculation shows that
\[
	\bp \lambda & \\ & 1 \ep s_2 \bp \lambda^{-1} & \\ & 1 \ep = s_1^2 \in G_1 \qquad  \text{and} \qquad
	\bp \lambda & \\ & 1 \ep t_2 \bp \lambda^{-1} & \\ & 1 \ep = t_1s_1t_1s_1^{-1}t_1 \in G_1.
\]
It follows that $\Big(\begin{smallmatrix}\lambda & \\ & 1 \end{smallmatrix} \Big)
G_2 \Big(\begin{smallmatrix} \lambda^{-1} & \\ & 1 \end{smallmatrix} \Big) \subset
G_1$, and since $G_1$ and $G_2$ have the same order, we have proved they are
conjugate.
\end{proof}

\begin{proposition}
  \label{prop:icosahedral5}
Suppose $p = 5$ and $G \subset \PGL_2(k)$ is icosahedral. Then $G$ is conjugate
to $\PSL_2(\FF_5)$. Moreover, we can take $s = \mat{1 & 1 \\ & 1}$ and $t =
\mat{ & -1 \\ 1 &}$ as generators such that $s^5 = t^2 = (st)^3 = I$.
\end{proposition}

\begin{proof}
The strategy is essentially the same as in the previous proposition. First we
choose elements $s,t$ with $s^5 = t^2 = (st)^3 = I$ (Lemma~\ref{Lem: Icosahedral
  presentation}). Since $p = 5$, $s$ is unipotent, so we may conjugate to get $s
= \mat{1 & 1 \\ & 1}$. Write $t = \mat{ \alpha &
  \beta\\ \gamma & - \alpha}$.  Then
\[
	st = \bp 1 & 1 \\ & 1 \ep 
	\bp \alpha & \beta \\ \gamma & - \alpha \ep = 
	\bp \alpha + \gamma & \beta - \alpha \\ \gamma & - \alpha \ep.
\]
The condition for $st$ to have order~3 is
\[
	\tr(st)^2 - \det(st) = \alpha^2 + \beta \gamma + \gamma^2 = 0.
\]
If $\gamma = 0$, then this implies $\alpha = 0$, so that $\det(t) = 0$. Hence
$\gamma \neq 0$, and we may as well assume that $\gamma = 1$. The previous
equation then implies $\beta = - \alpha^2 - 1$. That is, $t = \mat{ \alpha & -
  \alpha^2 - 1 \\ 1 & - \alpha}$. Finally, we conjugate $G$ by $\mat{1 & -
  \alpha \\ & 1}$. This does not affect the subgroup generated by $s$, but it
does give
\[
	\bp 1 & - \alpha \\ & 1 \ep
	\bp \alpha & - \alpha^2 - 1 \\ 1 & - \alpha \ep 
	\bp 1 & \alpha \\ & 1 \ep  =
	\bp & -1 \\ 1 & \ep.
\]
Hence we may assume without loss of generality that $t = \mat{ & -1 \\ 1 &}$.

We have shown that, up to $\PGL_2(k)$-conjugacy, we have $G = \langle \mat{1 & 1
  \\ & 1}, \mat{ & -1 \\ 1 &} \rangle \subset \PSL_2(\FF_5)$. But these two
groups have the same order, so that $G = \PSL_2(\FF_5)$.
\end{proof}


\section{The \texorpdfstring{$p$}-regular case}
\label{Sec: p-regular}

\begin{convention*}
Throughout this section, we will assume $k$ is an algebraically closed field.
\end{convention*}

Our goal for this section is to prove Theorem~C \textit{when $k$ is an
  algebraically closed field}.

Let $G \subset \PGL_2(k)$ be a finite $p$-regular subgroup. Any nontrivial
element $s \in G$ fixes a unique pair of points $\{x_s, y_s\}$, and so there is
a maximal cyclic subgroup $G(s) \subset G$ containing $s$, namely the set of all
elements of $G$ fixing $x_s$ and $y_s$. Let $N(s)$ be its normalizer in $G$;
then $[N(s) : G(s)] = 1$ or $2$ by Proposition~\ref{Prop: Dihedral
  normalizer}. By letting $G$ act by conjugation on its maximal cyclic
subgroups, we find that $G(s)$ lies in a system of $|G| / |N(s)|$ conjugate
subgroups. Let $G_1, \ldots, G_r$ be a complete set of representatives of the
conjugacy classes of maximal cyclic subgroups of $G$. Let $d_i = |G_i| \geq 2$
and $f_i = [N_G(G_i) : G_i]$. As $G$ is $p$-regular, each of its nontrivial
elements lies in a unique maximal cyclic subgroup. This yields
\[
	|G| = 1 + \sum_{i = 1}^r (d_i - 1) \frac{|G|}{d_if_i}.
\]
Dividing by $|G|$ and rearranging, we have
\begin{equation}
\label{Eq: 1 / |G|}
\frac{1}{|G|} = 1 - \sum_{i = 1}^r \frac{1}{f_i} \left( 1 - \frac{1}{d_i} \right), \text{ and } 
\end{equation}
\begin{equation}
\label{Eq: d_if_i bound}
d_i f_i \leq |G| \quad (i = 1, \ldots, r).
\end{equation}
The summands on the right side of \eqref{Eq: 1 / |G|} have size at least
$\frac{1}{2}\left(1 - \frac{1}{2}\right) = \frac{1}{4}$; as the left side is
positive, we find that $r \leq 3$. In the remainder of the proof, we treat the
various cases that can occur for $r, f_i, d_i$.

\bigskip

\textbf{Case $r = 1$.}  If $f = 2$, then \eqref{Eq: 1 / |G|} implies $|G| = 2d /
(d+1)$, which is not an integer. Hence $f = 1$, and \eqref{Eq: 1 / |G|} gives
$|G| = d$. That is, $G$ is cyclic.

\bigskip

\textbf{Case $r = 2$.} In this case, \eqref{Eq: 1 / |G|} becomes
\[
1 - \frac{1}{|G|} = \frac{1}{f_1} \left( 1 - \frac{1}{d_1} \right) 
	+\frac{1}{f_2} \left( 1 - \frac{1}{d_2} \right)
\]
If $f_1 = f_2 = 1$, then the left side is smaller than~1 while the right side is
$\geq 1$. If $f_1 = f_2 = 2$, then \eqref{Eq: 1 / |G|} and \eqref{Eq: d_if_i
  bound} become
\[
\frac{2}{|G|} = \frac{1}{d_1} + \frac{1}{d_2}, \quad \frac{2}{|G|} \leq \frac{1}{d_i}.
\]
Evidently this is impossible, so we may assume without loss of generality that
$f_1 = 1$ and $f_2 = 2$.

Now we find that 
\[
\frac{1}{|G|} = \frac{1}{d_1} + \frac{1}{2d_2} - \frac{1}{2} \leq \frac{1}{d_1} - \frac{1}{4}, 
\]
so that $d_1 =2$ or $3$. If $d_1 = 2$, then $|G| = 2d_2$, so that $G$ is
dihedral. If $d_1 = 3$, then $1 / |G| = 1 / (2d_2) - 1/6$, so that $d_2 =
2$. Thus $|G| = 12$. We claim $G$ is tetrahedral. The normalizer of a Sylow
3-subgroup is not all of $G$ since $f_1 = 2$; in particular, $G$ is
non-abelian. The subgroups of order~2, of which there are $|G| / 2f_2 = 3$, form
a single conjugacy class, so that they generate a normal subgroup of
order~4. Hence $G$ is tetrahedral by Lemma~\ref{Lem: Tetrahedral}.

\bigskip

\textbf{Case $r = 3$.} Here we must have $f_1 = f_2 = f_3 = 2$. Indeed, if $f_1
= 1$, then \eqref{Eq: 1 / |G|} becomes
\[
\frac{1}{|G|} = \frac{1}{d_1} - \frac{d_2 - 1}{f_2d_2} - \frac{d_3 - 1}{f_3d_3} \leq \frac{1}{d_1}
 	-\frac{1}{4} - \frac{1}{4} \leq 0,
\]
an evident contradiction. So letting $f_i = 2$ for all $i$, \eqref{Eq: 1 / |G|}
is equivalent to
\[
	1 + \frac{2}{|G|} = \frac{1}{d_1} + \frac{1}{d_2} + \frac{1}{d_3}.
\]
If every $d_i \geq 3$, then the right side is at most~1 while the left is
strictly larger than 1. So without loss of generality, we have $d_3 = 2$:
\[
\frac{1}{2} + \frac{2}{|G|} = \frac{1}{d_1} + \frac{1}{d_2}.
\]
If $d_1$ or $d_2$ is~2, we may take $d_2 = 2$, so that $|G| = 2d_1$ and $G$ is
dihedral. Otherwise, we have $d_1 > 2$ and $d_2 > 2$. The above equation implies
that both $d_1$ and $d_2$ cannot be larger than~3. So let us suppose $d_2 =
3$. Thus
\[
\frac{1}{6} + \frac{2}{|G|} = \frac{1}{d_1}.
\]
Hence $d_1 < 6$. For $d_1 = 3, 4, 5$, we find $|G| = 12, 24, 60$,
respectively. We treat these cases separately now.

If $d_1 = 3, d_2 = 3, d_3 = 2$, we find that $G$ has two non-conjugate subgroups
of order~3. But $G$ has order $12 = 3\cdot4$, so we have contradicted the Sylow
theorems.
	
If $d_1 = 4, d_2 = 3, d_3 = 2$, then $|G| = 24$, and $G$ is octahedral by
Lemma~\ref{Lem: Octahedral}. Indeed, to check that $G$ has no central element of
order~2, observe that if $s$ were such an element, then it would fix exactly two
points $x$ and $y$. By commutativity, the subgroups of order~3 would act on
these two points, hence fixing them, and hence $s$ lies in a maximal cyclic
subgroup containing an element of order~3, a contradiction.
	
If $d_1 = 5, d_2 = 3, d_3 = 2$, then we argue that $G$ is of icosahedral
type. Let $n_i$ be the number of elements of $G$ of order $i$. By hypothesis on
the $d_j$'s, we see that $n_i = 0$ if $ i \ne 1, 2, 3, 5$. Thus, $n_1 = 1$ and
\begin{equation}
  \label{eq:elt_count}
  60 = 1 + n_2 + n_3 + n_5. 
\end{equation}
The Sylow 3- and 5-subgroups are not normal in $G$ because $f_1 = f_2 = 2$. By
the Sylow theorems, there are 6 Sylow 5-subgroups, either 4 or 10 Sylow
3-subgroups, and 1,3,5, or 15 Sylow 2-subgroups. Note that each Sylow 2-subgroup
is dihedral of order 4 --- i.e., a Klein 4-group. Thus, $n_5 = 24$, $n_3 = 8$ or
$20$, and $n_2 = 3$, $9$, $15$, or $45$. To satisfy \eqref{eq:elt_count}, we
must have $n_3 = 20$ and $n_2 = 15$. That is, there are 10 conjugate 3-subgroups
and 15 elements of order 2 lying in 5 conjugate Klein 4-groups. Apply
Lemma~\ref{Lem: Icosahedral} to see that $G$ is icosahedral.

We have now shown that the groups presented in Theorem~C constitute all possible
isomorphism classes of finite $p$-regular subgroups of $\PGL_2(k)$. In the
previous section we constructed all of these groups and showed that they are
unique up to $\PGL_2(k)$-conjugation.

\begin{remark}
  For the reader with a background in algebraic geometry, Klein originally
  deduced equation \eqref{Eq: 1 / |G|} from the Riemann-Hurwitz formula for the
  quotient map $\PP^1 \to \PP^1 / G$; see \cite[Ch.V.2]{Klein_Icosahedron}.
\end{remark}

        
\section{Subgroups with elements of order~\texorpdfstring{$p$}\phantom{ }}
\label{Sec: p-irregular}

\begin{convention*}
 Throughout this section we will assume $k$ is an algebraically closed field.
\end{convention*}

This entire section is devoted to a proof of Theorem~B \textit{in the case where
  $k$ is an algebraically closed field}. More precisely, the statement reduces
to the following:

\begin{theorem}
\label{Thm: Finite subgroups all p, algebraically closed}
Let $k$ be an algebraically closed field of characteristic $p > 0$.
\begin{enumerate}
\item Let $q$ be a power of $p$.  There is exactly one conjugacy class of
  subgroups of $\PGL_2(k)$ isomorphic to each of $\PSL_2(\FF_q)$ and
  $\PGL_2(\FF_q)$.
					
\item Let $n \in \NN \smallsetminus p\NN$ and $m \in \NN$ with $m \ge 1$.  The
  conjugacy classes of $p$-semi-elementary subgroups of $\PGL_2(k)$ of order
  $p^m n$ are parameterized by the set of homothety classes of rank-$m$
  subgroups $\Gamma \subset k$ that are stable under multiplication by elements
  of $\mu_n(k)$. The correspondence is
\[
\Gamma \mapsto \bp \mu_n(k) & \Gamma \\ & 1 \ep.
\]

\item Suppose that $p = 2$ and $n > 1$ is an odd integer.  Then there is a
  unique conjugacy class of dihedral subgroups of $\PGL_2(k)$ of order~$2n$.
					  
\item If $p = 3$, then there is exactly one conjugacy class of subgroups of
  $\PGL_2(k)$ isomorphic to $\mathfrak{A}_5$.
\end{enumerate}
Any $p$-irregular subgroup of $\PGL_2(k)$ is among the four types listed here.
\end{theorem}	

\begin{remark}
Evidently the above result gives Theorem~B when $k$ is algebraically closed
except perhaps when $p = 2$ and $G$ is dihedral. But in this latter case, we
observe that $(k^\times)^2 = k^\times$, and so the two statements agree.
\end{remark}

Let us begin the proof of Theorem~\ref{Thm: Finite subgroups all p,
  algebraically closed}. Suppose that $G \subset \PGL_2(k)$ is a finite subgroup
containing an element of order~$p$. Write $|G| = p^m n$ with $p \nmid n$ and $m
\geq 1$, and fix a Sylow $p$-subgroup $P \subset G$. Without loss of generality,
we may conjugate $G$ so that $P = \mat{1 & \Gamma \\ & 1}$ for some additive
subgroup $\Gamma \subset k$ of rank~$m$ (\S\ref{Sec: p-groups}). Let $N =
N_G(P)$ be the normalizer of $P$ in $G$. Every element of $N$ fixes $\infty \in
\PP^1(k)$. Indeed, every element of $P$ fixes $\infty$, and
\begin{align*}
s \in N \ &\Longrightarrow \ s u s^{-1} \in P \quad (u \in P) \\
  &\Longrightarrow \ s u s^{-1}.\infty = \infty \quad (u \in P) \\
  & \Longrightarrow \ u.(s^{-1}.\infty) = s^{-1}.\infty \quad (u \in P) \\
   & \Longrightarrow s^{-1}.\infty = \infty.
\end{align*}
After a suitable conjugation of $G$ we may suppose that $N \subset P \rtimes
\mat{\mu_d(k) & \\ & 1}$ for some integer $d$ coprime to $p$ with $ \mu_d(k)
\subset \FF_\Gamma^\times$ (Proposition~\ref{Prop: Borel Normalization}).  But
$P$ is normal is this semi-direct product, so $N = P \rtimes \mat{\mu_d(k) &
  \\ & 1}$. Let us write $\FF_\Gamma = \FF_{p^\ell}$. As $\Gamma$ is an
$\FF_\Gamma$-vector space, we find $\ell \mid m$.

If $P$ is normal in $G$, then $N = G$ is a subset of the standard Borel subgroup; we
have already dealt with this case in \S\ref{Sec: Borel}. Now suppose that $P$ is
not normal in $G$, and let us count the elements of $G$ of order $p$ in two
different ways.

First, let $P$ act on $G$ by conjugation. If $Q$ is another Sylow $p$-subgroup,
then $Q$ fixes a unique point $x \in \PP^1(k) \smallsetminus \{\infty\}$. As $s$
varies over $P$, we find $s.x$ varies over a set of $|P| = p^m$ distinct
elements.  For $s \in P$, we also have $sQs^{-1}.(s.x) = s.x$, so that
$sQs^{-1}$ fixes $s.x$. Thus, the orbit of $Q$ under the conjugation action of
$P$ has cardinality $p^m$. Writing $f>0$ for the number of orbits of Sylow
$p$-subgroups distinct from $P$, it follows that
\begin{equation}
\label{Eq: Order p guys}
\left| \left\{s \in G \ : \ s^p = I, s \ne I \right\} \right| = (|P| -1) + fp^m (|P| -1) = (1 + fp^m)(p^m - 1),
\end{equation}
and the elements of order $p$ lie in $1 + fp^m$ Sylow $p$-subgroups of $G$. Note
further that $G$ acts transitively on the set of Sylow $p$-subgroups by
conjugation, and the stabilizer of $P$ under this action is precisely $N$. Hence
\begin{equation*}
	|G| = |N| \cdot |1 + fp^m| = (1 + fp^m)p^md. 
\end{equation*}

Second, we estimate the number of elements of order~$p$ in $G$ as follows. Let
$\{s_i : i = 1, \ldots, fp^m\}$ be representatives of the nontrivial cosets of $
G / N$, and write $s_i = \mat{ \alpha_i & \beta_i \\ \gamma_i & \delta_i}$. Note
that $\gamma_i \neq 0$, else $s_i \in N$. If $t_{\lambda, \mu} = \mat{\lambda &
  \mu \\ & 1} \in N$, then we have
\[
s_i t_{\lambda, \mu} = 
	\bp \alpha_i \lambda & \alpha_i \mu + \beta_i \\ \gamma_i \lambda & \gamma_i \mu + \delta_i \ep.
\] 
By Lemma~\ref{Lem: order p}, $s_i t_{\lambda, \mu}$ has order $p$ if and only if
\begin{equation}
\label{Eq: order $p$ rep}
\left( \alpha_i \lambda + \gamma_i \mu + \delta_i\right)^2 = 4\lambda \det(s_i).
\end{equation}
Define $\varepsilon_2 = 1$ and $\varepsilon_p = 2$ for $p \ge 3$ prime. For
fixed $s_i$ and $\lambda$, there are precisely $\varepsilon_p$ values of $\mu
\in k$ such that \eqref{Eq: order $p$ rep} is satisfied. So for a given $s_i$,
there are at most $\varepsilon_p d$ elements $t_{\lambda, \mu} \in N$ such that
\eqref{Eq: order $p$ rep} is satisfied. Combining this argument with \eqref{Eq:
  Order p guys}, we find that
\[
(1 + fp^m)(p^m - 1) = \left| \left\{s \in G \ : \ s^p = I, s \ne I \right\} \right| \leq (p^m - 1) + \varepsilon_p d fp^m.
\]		
Subtracting $p^m - 1$ from both sides yields
\begin{equation}
\label{Eq: sandwich}
\begin{aligned}
	fp^m(p^m - 1) \leq \varepsilon_pdfp^m \ 
	&\Rightarrow \ p^m - 1 \leq \varepsilon_p d \leq \varepsilon_p (p^\ell - 1)  
	\qquad (p^\ell = |\FF_\Gamma|) \\
	&\Rightarrow p^m - 1 < 2p^\ell - 1.
\end{aligned}
\end{equation}
If $\ell < m$, then this gives $p^{m - \ell} < 2$, which is impossible. Since
$\ell \mid m$, we must have $\ell = m$.

For simplicity in what follows, let us write $q = p^m$. Now $\FF_\Gamma =
\FF_q$, $|\Gamma| = q$, and $\mu_d(k) \subset \FF_\Gamma^\times =
\FF_q^\times$. After conjugating by $\mat{\gamma & \\ & 1}$ for some $\gamma \in
\Gamma \smallsetminus\{0\}$, we may assume that $1 \in \Gamma$. Since $\Gamma$
is stable under multiplication by $\FF_\Gamma$, it follows that $\Gamma =
\FF_q$.

The first line of \eqref{Eq: sandwich} gives
\[
	 d \geq \frac{q-1}{\varepsilon_p}.
\]
So if $p = 2$, then $\mu_d(k) = \FF_q^\times$, and if $p > 2$, then $\mu_d(k) =
\FF_q^\times$ or $(\FF_q^\times)^2$. We summarize what has been achieved thus
far.
	
\begin{lemma}
  \label{lem:summary}
If $G$ is a finite subgroup of $\PGL_2(k)$ containing an element of order~$p$,
then up to conjugation, exactly one of the following is true:
\begin{itemize}
\item $G \subset B(k)$ (in which case $G$ is $p$-semi-elementary), or
\item $G$ contains the Sylow $p$-subgroup $\mat{1 & \FF_q \\ & 1}$ with
  normalizer $N = \mat{1 & \FF_q \\ & 1} \rtimes \mat{\Lambda & \\ & 1}$, where
  $\Lambda = \FF_q^\times$ or $\Lambda = (\FF_q^\times)^2$. There exists an
  integer $f > 0$ such that
\[
	|G| = |\Lambda| (1 + fq)q.
\]
\end{itemize}
\end{lemma}

We assume in what follows that we are in the second case of
Lemma~\ref{lem:summary}. It will also be convenient to have the following result
at our disposal.
	
\begin{lemma}
\label{Lem: Different infinities}
Write $G = N \sqcup s_1N \sqcup \cdots \sqcup s_{fq}N$ as above. Then
$s_i.\infty \neq s_j.\infty$ whenever $i \neq j$.
\end{lemma}

\begin{proof}
If $s_i.\infty = s_j.\infty$, then $s_i^{-1}s_j.\infty = \infty$. By the
characterization of $N$ as the largest subgroup of $G$ that fixes $\infty$, we
must have $s_i^{-1}s_j \in N$, or equivalently $s_j \in s_i N$.
\end{proof}
	

\subsection{The case \texorpdfstring{$\Lambda = (\FF_q^\times)^2$}\phantom{ }}
\label{Sec: Lambda = squares}
	
Note that this includes the case $q$ even, where $(\FF_q^\times)^2 =
\FF_q^\times$. For $q > 3$, we will show that, perhaps after a further
conjugation, we have $G \subset \PSL_2(\FF_q)$. Then
\[
\frac{q(fq+1)(q-1)}{\varepsilon_p} = |G| \leq |\PSL_2(\FF_q)| = \frac{q(q^2 - 1)}{\varepsilon_p},
\]
so that $f = 1$ and $G = \PSL_2(\FF_q)$. For $q = 2$, we will show that $G \cong
\mathfrak{D}_{1+2f}$, a dihedral group. For $q = 3$, we will show that $G$ is
tetrahedral, and hence conjugate to $\PSL_2(\FF_3)$ by Corollary~\ref{cor:
  tetrahedral}.

We begin by arranging for the nontrivial coset representatives $s_1, \ldots,
s_{qf}$ to have order $p$.  Since $\Lambda = (\FF_q^\times)^2$, the first
inequality of \eqref{Eq: sandwich} is actually an equality, which implies that
for each fixed coset representative $s_i$, and each $\lambda \in \Lambda$, there
are exactly $\varepsilon_p > 0$ elements $\mu \in \FF_q$ satisfying \eqref{Eq:
  order $p$ rep}. In particular, each coset contains an element of order~$p$, so
after choosing new coset representatives, we may assume that each $s_i$ has
order $p$.

We divide the remainder of the proof into four cases: $q = 2^m$ for $m > 1$,
$q > 3$ odd, $q = 2$, and $q = 3$.


\subsubsection{The Case $q = 2^m$ with $m > 1$.}
\label{Sec: q = 2^m, m > 1}
Write our coset representative as $s_i = \mat{\alpha_i & \beta_i \\ \gamma_i &
  \delta_i}$ for $i = 1, \ldots qf$. Replace $s_i$ with $\gamma_i^{-1}s_i$ in
order to assume that $\gamma_i = 1$. (Recall that $\gamma_i = 0$ would imply
$s_i \in N$, contradicting our setup.) If we can show that $\alpha_i, \beta_i,
\delta_i \in \FF_q$, then we will be able to conclude that $G \subset
\PGL_2(\FF_q) = \PSL_2(\FF_q)$.

As $s_i$ has order~2, we see that $\delta_i = \alpha_i$ (Lemma~\ref{Lem: order
  n}).  We saw above that for each coset representative $s_i$ and each $\lambda
\in \Lambda =\FF_q^\times$, there is a choice of $\mu \in \FF_q$ such that
$s_it_{\lambda,\mu}$ has order~2. Take $\lambda \ne 1$ so that $s_i
t_{\lambda,\mu} \ne s_i$. Looking at \eqref{Eq: order $p$ rep}, we find that
\[
 \left( \alpha_i \lambda + \gamma_i \mu + \delta_i\right)^2 = 0.
\]
Since $\delta_i = \alpha_i$ and $\gamma_i = 1$, we conclude that
$\alpha_i = \mu / (\lambda + 1) \in \FF_q$.

It remains to show that $\beta_i \in \FF_q$. If $s_i s_j \in N$, then $s_j \in
s_iN$, so that $i = j$. So for $i \neq j$ there exist $\ell$ and $t_{\lambda,
  \mu} = \mat{\lambda & \mu \\ & 1} \in N$ such that $s_i s_j = s_\ell
t_{\lambda, \mu}$. It follows that
\begin{equation*}
\alpha_\ell = s_\ell.\infty = s_\ell t_{\lambda, \mu}.\infty 
	= s_i s_j.\infty 
	= s_i.\alpha_j = \frac{\alpha_i \alpha_j + \beta_i}{ \alpha_j - \alpha_i} 
	= \frac{\alpha_i \alpha_j}{\alpha_j - \alpha_i} + 
		\beta_i \frac{1}{\alpha_j - \alpha_i}.
\end{equation*}
Note that $\alpha_j - \alpha_i \neq 0$, else $\alpha_i = \alpha_j$, from which
we deduce that $s_i.\infty = s_j.\infty$, in contradiction to Lemma~\ref{Lem:
  Different infinities}. The above computation shows that $\beta_i \in \FF_q$, as desired. 


\subsubsection{The Case $q > 3$ Odd.}
\label{sec:det(s)=1}
Recall that each of our coset representatives $s_i$ has order~$p$. After a
suitable scaling, we may further assume that $\det(s_i) = 1$. By Lemma~\ref{Lem:
  order p}, to say that $s_i$ has order $p$ means that
\[
(\alpha_i + \delta_i)^2 = 4 \det(s_i) = 4.
\]
Hence $\alpha_i + \delta_i = \pm 2$. Replacing $s_i$ with $-s_i$, we may assume
that $\tr(s_i) = \alpha_i + \delta_i = 2$. We will now argue that $\alpha_i,
\beta_i, \gamma_i, \delta_i \in \FF_q$.

Write $\Lambda = \{\eta^2 : \eta \in \FF_q^\times\}$. Then \eqref{Eq: order $p$
  rep} becomes
\begin{equation}
\label{Eq: Order p coset rep}
(\alpha_i \eta^2 + \gamma_i \mu + 2 - \alpha_i)^2 = 4 \eta^2 \Longleftrightarrow
\alpha_i(\eta^2 - 1) + \gamma_i \mu = -2 + 2\eta.
\end{equation}
The right side lies in $\FF_q$, and we know that for each choice of $i$ and
$\eta \in \FF_q^\times$, there is an element $\mu = \mu_{i,\eta} \in \FF_q$
satisfying the above equation.

Setting $\eta = -1$ in \eqref{Eq: Order p coset rep} shows $\gamma_i = -4 /
\mu_{i, -1} \in \FF_q \smallsetminus \{0\}$.

As $q > 3$, we may choose $\eta \neq \pm 1$. Then $\alpha_i = (-2 + 2\eta -
\gamma_i \mu_{i, \eta})(\eta^2 -1 )^{-1} \in \FF_q$, and $\delta_i = 2 -
\alpha_i \in \FF_q$. Since $\det(s_i) = 1$, it follows that $\beta_i \in \FF_q$
as well. Thus $G \subset \PGL_2(\FF_q)$. But $\det(s_i) = 1$ for each $i$ and
$\det(t)$ is a square for each $t \in N$. Hence $G \subset \PSL_2(\FF_q)$ as
desired.


\subsubsection{The Case $q = 2$.}
Setting $s_0 = \mat{1 & 1 \\ & 1}$ and observing that $N = \{I, s_0\}$, our
setup allows us to write $G = \sqcup_{i = 0}^{2f} s_i N$, where each $s_i$ has
order 2. As $|G| = 2(1+2f)$, each Sylow subgroup has order~2. There are at most
$1+2f$ such subgroups. As $s_0, \ldots, s_{2f}$ are distinct elements of
order~2, they must be all of the elements of order~2. Define
\[
t_i = s_i s_0 \qquad (i = 0, \ldots, 2f).
\]
Then $G = \{t_0, \ldots, t_{2f}, s_0, \ldots, s_{2f}\}$, and each $t_i$ has odd
order.
 	
Define the set $H = \{t_i : i = 0, \ldots, 2f\}$. We claim that $H$ is an
abelian subgroup of $G$. First observe that $t_a s_b \not\in H$ for any $a,
b$. Indeed, since the $s_i$ are all conjugate, there exists $u \in G$ such that
$s_0 = u s_b u^{-1}$. Set $t_c = u t_a u^{-1}$. Then
\[
(ut_a u^{-1})(u s_bu^{-1}) = t_c s_0 = s_c  \ \Rightarrow \ t_as_b = u^{-1} s_c u \in G \smallsetminus \{H\}.
\]
Now define $s_{b'} = t_as_b$. Then
\[
t_a t_b s_0 = t_a s_b = s_{b'} = t_{b'} s_0 \ \Rightarrow \ t_a t_b = t_{b'} \in H,
\]
and hence $H$ is closed under multiplication. Next note that
\begin{equation}
\label{Eq: Conjugate inverse}
	s_0 t_a s_0^{-1} = s_0 t_a s_0 = s_0 s_a = (s_a s_0)^{-1} = t_a^{-1}.
\end{equation}
Thus $t_a^{-1} \in H$ since a conjugate of $t_a$ cannot have order 2, and hence
$H$ is closed under inversion. Finally, for $t_a, t_b \in H$, we have shown that
$t_d := t_b^{-1} t_a^{-1} \in H$. It follows that
\begin{align*}
t_at_b = (t_b^{-1}t_a^{-1})^{-1} = t_d^{-1} 
	&= s_0 t_d s_0^{-1} \qquad \text{by } \eqref{Eq: Conjugate inverse} \\
	&= (s_0 t_b^{-1})(t_a^{-1} s_0^{-1}) \\
	&= (t_bs_0) (s_0t_a) \qquad \text{by } \eqref{Eq: Conjugate inverse} \\
	&= t_b t_a,
\end{align*}
so that $H$ is abelian.

Since $H$ is abelian of odd order, we may apply Theorem~C (which we have already
proved in the algebraically closed setting) to conclude that $H$ is cyclic. If
$t$ is a generator, then $G = \langle t, s_0 \rangle$. Now \eqref{Eq: Conjugate
  inverse} gives $s_0 t s_0 = t^{-1}$, which is precisely the relation that
defines a dihedral group. So $G \cong \mathfrak{D}_{1+2f}$, and there is a
unique conjugacy class of such groups by Proposition~\ref{prop:dihedral}. 


\subsubsection{The Case $q = 3$.}
As $q = 3$, every Sylow 3-subgroup of $G$ is cyclic. Moreover, every element
whose order is divisible by 3 has order exactly 3 (Proposition~\ref{Prop:
  Cyclic}), and hence lies in a Sylow 3-subgroup. We may therefore apply the
argument at the beginning of \S\ref{Sec: p-regular}. Let $G_1, \ldots, G_r$ be a
complete set of representatives for conjugacy classes of maximal cyclic
subgroups; without loss, we assume that $|G_1| = 3$. Since $\Lambda =
(\FF_q^\times)^2 = \{1\}$ in our setting, we see that $d_1 = 3$ and $f_1 =
1$. Thus, \eqref{Eq: 1 / |G|} and \eqref{Eq: d_if_i bound} become
\begin{equation}
  \label{eq:q=3formula}
\frac{1}{|G|} = \frac{1}{3} - \sum_{i=2}^r \frac{1}{f_i} \left( 1 - \frac{1}{d_i} \right) \qquad
\text{and} \qquad d_i f_i \le |G| \quad (i=2, \ldots, r). 
\end{equation}
Moreover, since $G_2, \ldots, G_r$ are $p$-regular, we find that $f_i = 1$ or
$2$ for $i \ge 2$.  As in \S\ref{Sec: p-regular}, each summand above is at
least $\frac{1}{4}$, so that $r = 1$ or $2$. If $r = 1$, then $|G| = 3$, and $G$
is contained inside a Borel subgroup of $\PGL_2(k)$. Recall that we are assuming
that $G$ does not lie in a Borel subgroup, so $r = 2$. If $d_2 > 2$, then the
right side of \eqref{eq:q=3formula} is nonpositive. Thus, $d_2 = 2$ and $f_2 =
2$, and $|G| = 12$.

Evidently $G$ is nonabelian since the normalizer of its Sylow $3$-subgroup is
not equal to $G$. The Sylow theorems show that $G$ has 4 Sylow $3$-subgroups,
and hence 8 elements of order~3. Since $d_2 = f_2 = 2$, $G$ contains a dihedral
subgroup $K$ of order~4 --- i.e., a Klein 4-group. As $|G| = 12$, we conclude
there are only 3 elements of order~2. Thus, $K$ is normal, and Lemma~\ref{Lem:
  Tetrahedral} shows that $G$ is tetrahedral. 


\subsection{The Case \texorpdfstring{$\Lambda = \FF_q^\times$, $q$} Odd}

Suppose now that $\Lambda = \FF_q^\times$. Then $|G| = q(q-1)(1+fq)$ with $f >
0$. Observe that $\mat{\Lambda & \\ & 1}$ is the maximal subgroup of $G$ fixing
$0$ and $\infty$. For if $\mat{\alpha & \\ & 1} \in G$, then this element
normalizes $P = \mat{1 & \Gamma \\ & 1}$, and so $\alpha \in \Lambda$.

Let $t_0 = \mat{-1 & \\ & 1} \in G$. We claim that the centralizer of $t_0$ in
$G$ is either $H = \mat{ \FF_q^\times & \\ & 1}$ or the dihedral group $D = H
\rtimes \langle \mat{ & \tau \\ 1 &} \rangle$ for some $\tau \in
k^\times$. Indeed, any element that commutes with $t_0$ must stabilize $0$ and
$\infty$. The maximal subgroup of $G$ that fixes both of these points is $H$. If
the centralizer is larger than $H$, then the remaining elements must swap $0$
and $\infty$;  any such element has the form $\mat{ & \tau \\ 1 &}$. If $\mat{ &
  \tau \\ 1 &}$ and $\mat{ & \tau' \\ 1 &}$ are two such elements, then their
product is $\mat{ \tau/\tau' & \\ & 1} \in H$. Hence, the centralizer is either
$H$ or $D$, as desired.

We now argue that the centralizer of $t_0$ in $G$ cannot be $H$. We are going to
count elements in $G \smallsetminus N$ of order~2 in two different ways to
obtain a contradiction. Letting $G$ act on itself by conjugation, the
orbit-stabilizer theorem shows that there are $|G| / |H| = q(1 + fq)$ elements
conjugate to $t_0$, all of which have order~2. We note that the elements
conjugate to $t_0$ in $N = \Big( \begin{smallmatrix}1 & \FF_q \\ &
  1\end{smallmatrix} \Big) \rtimes \Big( \begin{smallmatrix} \FF_q^\times & \\ &
    1 \end{smallmatrix} \Big)$ must fix $\infty$ and one other $\FF_q$-rational
  point, so that there are $q$ of them in $N$. Hence
\[
\left| \left\{ s \in G \smallsetminus N : s^2 = I \right\}  \right|  \geq q(1+fq) - q = fq^2.
\]
An element of $s_iN$ is of the form
\[
s_i t_{\lambda, \mu} = 
\bp \alpha_i \lambda & \alpha_i \mu + \beta_i \\ \gamma_i \lambda & \gamma_i \mu + \delta_i \ep,
\]
and it has order 2 if and only if $\alpha_i \lambda + \gamma_i \mu + \delta_i =
0$. Note $\gamma_i \ne 0$ else $s_i \in N$. So given $s_i$ and $\lambda$, there
is exactly one $\mu \in \FF_q$ such that $s_i t_{\lambda, \mu}$ has order
2. Combining with the above lower bound for the number of elements of order 2,
we have that
\[
fq^2 \leq \left| \left\{  s \in G \smallsetminus N : s^2 = I \right\}\right| = fq (q-1),
\]
which is absurd. We conclude that $t_0$ is centralized by a dihedral group $D =
\Big( \begin{smallmatrix}\FF_q^\times & \\ & 1\end{smallmatrix} \Big) \rtimes
  \Big\langle \Big( \begin{smallmatrix} & \tau \\ 1& \end{smallmatrix} \Big)
  \Big\rangle$.

Applying the argument in the last paragraph to $D$ instead of $H$, we find that
the number of elements in $G \smallsetminus N$ conjugate to $t_0$ is precisely
$\frac{1}{2}q(1+fq) - q = \frac{1}{2}q(fq - 1)$. Since this is an integer, we
conclude $f$ must be odd. Moreover, it gives the lower bound
\begin{equation}
\label{Eq: Order 2 lower bound}
	\left| \left\{  s \in G \smallsetminus N : s^2 = I \right\} \right| \geq \frac{1}{2}q(fq - 1).
\end{equation}
We are going to count the number of elements of order 2 in $G \smallsetminus N$
in yet another way in order to bound $f$.
	
Since $\gamma_i \neq 0$ for any $i = 1, \ldots, fq$, we may assume that
$\gamma_i = 1$ in what follows. Let $n$ be the number of cosets $s_iN$
containing at least two elements of order~2. For each of these cosets, we may
assume that $s_i$ has order 2, so that $\delta_i = - \alpha_i$. Moreover, for
each such $i$, there exists $(\lambda, \mu) \in \FF_q^\times \times \FF_q
\smallsetminus \{(1,0)\}$ such that $s_i t_{\lambda, \mu}$ has order 2 --- i.e.,
\[
	\tr(s_i t_{\lambda, \mu}) =  \alpha_i(\lambda - 1) + \mu = 0.
\]
Hence $\alpha_i = \mu / (1 - \lambda) \in \FF_q$. For different choices of $i$,
we get different values of $\alpha_i = s_i.\infty$ (Lemma~\ref{Lem: Different
  infinities}), so that $n \leq |\FF_q| = q$. Since $\alpha_i \in \FF_q$, for
fixed $i$ and $\lambda \in \FF_q^\times$ there exists a unique solution $\mu \in
\FF_q$ to the above trace equation.  Thus, a coset $s_iN$ contains precisely
$q-1$ elements of order~2 or one element of order~2.

Let $m$ be the number of cosets $s_iN$ containing exactly one element of
order~2. Note that $m + n \leq fq$, the total number of nontrivial cosets. The
lower bound \eqref{Eq: Order 2 lower bound} for the number of elements of
order~2 combined with the arguments in the last paragraph gives
\begin{equation}
\label{Eq: Bound for f}
	\begin{aligned}
	&\frac{1}{2}q(fq - 1) \leq m + n(q-1) = m + n + n(q-2) \leq fq + q(q-2) \\
	&\qquad \Longrightarrow  f \leq \frac{2q - 3}{q - 2} = 2 + \frac{1}{q-2}.
	\end{aligned}
\end{equation}
Since $f$ is odd, we find $f = 1$, or $f = 3$ and $q = 3$.


\subsubsection{The case $f = 1$.}
Here we have $|G| = |\Lambda| (1 + fq)q = q(q^2 - 1) = |\PGL_2(\FF_q)|$. We now
prove that $G \subset \PGL_2(\FF_q)$, so that this containment must actually be
equality.

We showed above that $G$ contains an element of the form $\mat{ & \tau \\ 1 &}$
with $\tau \in k^\times$.  For each $\mu \in \FF_q$, we have
\[
	v_\mu := \bp 1 & \mu \\ & 1 \ep			
		\bp & \tau \\ 1 & \ep
		\bp 1 & - \mu \\ & 1 \ep = 
		\bp \mu & \tau - \mu^2 \\ 1 & - \mu \ep.
\]
As $\mu$ varies over the elements of $\FF_q$, we get $q$ distinct elements
$v_\mu$ of order~2 satisfying $v_\mu.\infty = \mu$. Since none of them lies in
$N$, each $v_\mu$ must lie in a distinct nontrivial coset $s_iN$
(Lemma~\ref{Lem: Different infinities}). There are only $fq = q$ such cosets, so
we conclude that every coset contains an element of order~2.

We have now shown that $N \subset \PGL_2(\FF_q)$, and that each nontrivial coset
representative may be chosen to have the form $s_i = \mat{ \alpha_i & \beta_i
  \\ 1 & -\alpha_i}$ with $\alpha_i \in \FF_q$. The final paragraph of
\S\ref{Sec: q = 2^m, m > 1} applies verbatim to show that $\beta_i \in \FF_q$ as
well, which proves that $G \subset \PGL_2(\FF_q)$ as desired.


\subsubsection{The case $f = 3, q = 3$.}
Here we find that $|G| = |\Lambda| (1 + fq)q = 60$. Note that all of the
inequalities in \eqref{Eq: Bound for f} becomes equalities in this case, so that
$n = 3$ and $m = 6$, and $G$ contains $12 + 3 = 15$ elements of order~2. We
showed at the beginning of this subsection that $t_0$ is centralized by a
dihedral group $D$ of order~4. Each such dihedral group $D$ contains three of
the elements of order 2, so that $G$ contains five conjugate Klein
4-groups. Moreover, at the beginning of \S\ref{Sec: p-irregular} we showed that
the number of Sylow 3-subgroups is $1 + fq = 10$. It follows that $G$ is
icosahedral (Lemma~\ref{Lem: Icosahedral}). This conjugacy class is unique by
Proposition~\ref{Prop: Icosahedral conjugacy}.



\section{Results of Serre and Beauville}
\label{sec:arithmetic_p_regular}

We now recall an arithmetic criterion for the existence of certain types of
subgroup in $\PGL_2(k)$. Serre proves the analogous statements for $\GL_2(k)$
\cite[\S2]{Serre_proprietes_galoisiennes}, which we adapt for our purposes.

\begin{theorem}
\label{thm:serre_existence} 
  Let $k$ be a field of characteristic $p \ge 0$.
  \begin{enumerate}
    \item If $n$ is coprime to $p$, then $\PGL_2(k)$ contains a cyclic
      subgroup of order $n$ if and only if $\zeta + \zeta^{-1} \in k$ for some
      primitive $n$-th root of unity $\zeta$.
    \item $\PGL_2(k)$ contains a tetrahedral subgroup if and only if
      \begin{itemize}
      \item $p = 2$ and $\FF_4 \subset k$; or
      \item $p \ne 2$ and $-1$ is the sum of two squares in $k$.
      \end{itemize}
    \item $\PGL_2(k)$ contains an octahedral subgroup if and only if $p \ne 2$
      and $-1$ is a sum of two squares in $k$.
    \item $\PGL_2(k)$ contains an icosahedral subgroup if and only if -1 is a
      sum of two squares in $k$ and either 
      \begin{itemize}
      \item $p = 2$ and $\FF_4 \subset k$;
      \item $p = 5$; or 
      \item $p \ne 2, 5$ and $\sqrt{5} \in k$. 
      \end{itemize}
  \end{enumerate}
\end{theorem}

\begin{remark}
  If $p = 2$, then $\PGL_2(k)$ does not contain an element of order~4
  (Proposition~\ref{Prop: Cyclic}). In particular, it cannot contain an
  octahedral subgroup.
\end{remark}

\begin{proof}[Proof of Theorem~\ref{thm:serre_existence}]
  We begin with the statement about cyclic groups. Given a primitive $n$-th root
  of unity $\zeta$ for some $n$, let us set $\lambda = \zeta + \zeta^{-1}$. If
  $\lambda \in k$, then the element $\mat{ \lambda + 1 & -1 \\ 1 & 1} \in
  \PGL_2(k)$ has order~$n$. Indeed, it is conjugate over $k(\zeta)$ to $\mat{
    \zeta & \\ & 1}$ (Corollary~\ref{Cor: Alternate Cyclic}). Conversely,
  suppose $s \in \PGL_2(k)$ has order $n$, and $p \nmid n$. If $s$ fixes two
  points of $\PP^1(k)$, then we may conjugate those points to $0$ and $\infty$
  using an element of $\PGL_2(k)$. In this way, we may assume without loss of
  generality that $s = \mat{ \zeta & \\ & 1}$ for some primitive $n$-th root of
  unity $\zeta \in k$. If instead the two fixed points of $s$ are quadratic over
  $k$, then after passing to a quadratic extension $k' / k$ we are in the
  previous case and $\zeta \in k'$. The minimal polynomial for $\zeta$ over $k$
  is $z^2 - \lambda z + 1$, so we must have $\lambda \in k$.

  To prove the remaining statements, we observe that Serre gives precisely the
  same criteria for existence of tetrahedral, octahedral, and icosahedral
  subgroups of $\GL_2(k)$ \cite[\S2.5]{Serre_proprietes_galoisiennes}. The
  kernel of the natural homomorphism $\pi \colon \GL_2(k) \to \PGL_2(k)$ is
  $k^\times I$, which is the center of $\GL_2(k)$. In particular, since
  $\mathfrak{A}_4$, $\mathfrak{S}_4$, and $\mathfrak{A}_5$ have trivial center,
  $\pi$ induces an isomorphism on any subgroup isomorphic to one of these. Thus,
  such a subgroup exists in $\GL_2(k)$ if and only if it exists in $\PGL_2(k)$.
\end{proof}

Next, we give Beauville's description of the conjugacy classes of $p$-regular
subgroups of $\PGL_2(k)$:

\begin{theorem}[{\cite[Thm.~4.2]{Beauville_Finite_Subgroups_2010}}]
  \label{thm:beauville_classification}
  Let $k$ be a field of characteristic $p \ge 0$.  
  \begin{enumerate}
    \item $\PGL_2(k)$ contains at most one conjugacy class of $p$-regular
      subgroups isomorphic to each of $\ZZ / n\ZZ$ ($n > 2$), $\mathfrak{A_4}$,
      $\mathfrak{S_4}$, and $\mathfrak{A_5}$.
    \item If $p \ne 2$, the conjugacy classes of subgroups of order~2 of
      $\PGL_2(k)$ are parameterized by $k^\times / (k^\times)^2$. The
      correspondence is given by $\tau \mapsto \mat{ & \tau \\ 1 &}$.
    \item If $p \ne 2$, the homomorphism $\overline{\det} \colon \PGL_2(k) \to
      k^\times / (k^\times)^2$ induces a bijection between
      \begin{itemize}
      \item Conjugacy classes of subgroups of $\PGL_2(k)$ isomorphic to $(\ZZ/2\ZZ)^2$, and
      \item Subgroups $G = \{1,\alpha,\beta,\alpha \beta\} \subset k^\times /
        (k^\times)^2$ of order at most~4 such that the conic $x^2 + \alpha y^2 +
        \beta z^2 = 0$ has a point in $\PP^2(k)$. Here we take $\beta = 1$ if
        $|G| \le 2$ and $\alpha = 1$ if $|G| = 1$.
      \end{itemize}
      The reverse correspondence is given by
      \[
      G \mapsto \left\langle
      \bp \lambda & \alpha \mu \\ \mu & -\lambda \ep, 
      \bp & -\alpha \\ 1 & \ep
      \right\rangle,
      \]
      where $(\lambda, \mu, 1)$ is a point on the conic.\footnote{A smooth conic
      in $\PP^2_k$ either has no rational point or is isomorphic to a line. In
      the latter case, it must have a point with nonzero $z$-coordinate.}
    \item Let $n > 2$, and assume that $k$ contains a primitive $n$-th root of
      unity, say $\zeta$. Let $\mathfrak{Dih}_n(k)$ denote the set of conjugacy
      classes of $p$-regular dihedral subgroups of $\PGL_2(k)$ of order
      $2n$. Then $\mathfrak{Dih}_n(k)$ is parameterized by $k^\times /
      \mu_n(k)(k^\times)^2$, and the correspondence is
      \[
      \alpha \mapsto \left\langle
      \bp \zeta & \\ & 1 \ep,
      \bp & \alpha \\ 1 \ep
      \right\rangle.
      \]
  \end{enumerate}
\end{theorem}

\begin{remark}
In the version of Theorem~\ref{thm:beauville_classification}(3) that appears in
\cite{Beauville_Finite_Subgroups_2010}, Beauville incorrectly insists that the
conic $x^2 + ay^2 + bz^2 = 0$ have a point for \textit{every} $a,b \in G$. For a
counterexample, we take $k = \RR$. Then $\RR^\times / (\RR^\times)^2 = \{\pm
1\}$. The subgroup
  \[
   \Gamma = \left\{\bp  \pm 1 & \\ & 1 \ep, \bp & \pm 1 \\ 1 & \ep \right\}
    \]
    satisfies $\overline{\det}(\Gamma) = \{\pm 1\}$. But the conic $x^2 + y^2 +
    z^2 = 0$ does not have a real point. One can show directly that every
    subgroup of $\PGL_2(\RR)$ isomorphic to $(\ZZ/2\ZZ)^2$ is conjugate to
    $\Gamma$. But since $\RR^\times / (\RR^\times)^2$ has only two subgroups,
    and one of them doesn't satisfy the conic property we require, this fact
    follows immediately from the corrected version of
    Theorem~\ref{thm:beauville_classification}(3).
\end{remark}


\section{Separably Closed Fields}
\label{Sec: Separably closed}

\begin{convention*}
Throughout this section we assume that $k$ is a separably closed field with
algebraic closure~$k_\alg$.
\end{convention*}

The goal of this section is to prove Theorem~C and Theorem~B \textit{in the case
  where $k$ is a separably closed field}.  In this setting, Theorem~B becomes:

\begin{theorem}
\label{Thm: Finite subgroups all p, separably closed}
Let $k$ be a separably closed field of characteristic $p > 0$. 
\begin{enumerate}
  \item Fix $q > 2$, a power of $p$.  There is exactly one conjugacy class of
    subgroups isomorphic to each of $\PSL_2(\FF_q)$ and $\PGL_2(\FF_q)$.

  \item Let $m, n$ be positive integers with $n$ coprime to $p$. The conjugacy
    classes of split $p$-semi-elementary subgroups of order $p^m n$ are
    parameterized by the set of homothety classes of rank-$m$ subgroups $\Gamma
    \subset k$ that are stable under multiplication by elements of
    $\mu_n(k)$. The correspondence is
  \[
  \Gamma \mapsto \bp \mu_n(k) & \Gamma \\ & 1 \ep.
  \]

\item Suppose that $p = 2$, and let $m \ge 1$ be an integer. The conjugacy
  classes of non-split $2$-elementary subgroups of order $2^m$ are parameterized
  by pairs $(k(\tau), G)$, where $k(\tau)$ is a quadratic inseparable extension
  of $k$ and $G$ is a subgroup of order $2^m$ of the abelian group
  \[
 \Omega(\tau) := \{I\} \cup
  \left\{ \bp \alpha & \tau^2 \\ 1 & \alpha \ep \ : \ \alpha \in k \right\}.
  \]      
						
\item Suppose that $p = 2$ and $n > 1$ is an odd integer.  Let
  $\mathfrak{Dih}_n(k)$ denote the set of conjugacy classes of dihedral
  subgroups of $\PGL_2(k)$ of order $2n$. The map $\mathfrak{Dih}_n(k) \to
  k^\times / (k^\times)^2$ defined by $G \mapsto \overline{\det}(t)$ for any
  involution $t \in G$ is well defined and bijective.
					  
\item If $p = 3$, then there is exactly one conjugacy class of subgroups
  isomorphic to $\mathfrak{A}_5$.
\end{enumerate}
Any $p$-irregular subgroup of $\PGL_2(k)$ is among the five types listed here.
\end{theorem}

Tracing through the proofs in Sections~\ref{Sec: Fixed Points}--\ref{Sec:
  p-irregular}, we find that there are a few ways in which the algebraically
closed nature of $k$ was used:
\begin{itemize}

\item To conjugate a stable pair of points to $0$ and $\infty$ (\S\ref{Sec:
  Cyclic}, \S\ref{Sec: Stable pair of points}). If a subgroup $G$ stabilizes a
  pair of distinct points $\{x,y\}$, then any nontrivial $s \in G$ has this
  property. It follows that either $s$ or $s^2$ fixes $x$ and $y$. If $s$ does
  not have order~2, then $x,y$ are distinct roots of a quadratic equation over
  $k$, and they are separable.
  
\item To replace $s$ with $\det(s)^{-1/2}\, s$ in order to assume $\det(s) =
  1$. This was only used in \S\ref{sec:det(s)=1} where $p$ is assumed to be
  odd. In that case, square roots are separable.

\item To assert the existence of a primitive fourth root of unity (\S\ref{Sec:
  Tetrahedral}, \S\ref{Sec: Octahedral}) and a primitive fifth root of unity
  (\S\ref{Sec: Icosahedron}). Roots of unity are always separable. 

\item To find a $k$-rational fixed point of $s = \mat{ \alpha & \beta \\ \gamma
  & \delta}$, which was then conjugated to $\infty$. If $\gamma = 0$, then
  $\infty$ is a fixed point of $s$. So we may suppose $\gamma = 1$. Then the
  equation defining the fixed points of $s$ is $z^2 + (\delta - \alpha) z -
  \beta = 0$. This polynomial is separable --- in which case it will have a root
  in $k$ --- if and only if $p > 2$ or $p =2$ and $\delta \neq \alpha$. So $s$
  fails to have a $k$-rational fixed point if and only if $p = 2$, $\delta =
  \alpha$, and $\beta$ is not a square in~$k$. In particular, $s$ must have
  order~2.

\item To assert the existence of $\sqrt{\tau}$, the fixed point of the element
  $s = \mat{ & \tau \\ 1 & }$ in \S\ref{Sec: Stable pair of points}. This square
  root is separable when $p > 2$, but it may fail to be when $p = 2$. Note that
  $s$ has order~2.
\end{itemize}

This discussion shows that Theorem~C holds for an arbitrary separably closed
field, and Theorem~\ref{Thm: Finite subgroups all p, separably closed} holds
provided the characteristic of $k$ is at least~3.

\begin{convention*}
In the remainder of this section, we assume that $k$ is a separably closed field
of characteristic~2.
\end{convention*}

Let us summarize what we have learned. 

\begin{lemma}
\label{Lem: 2-normal form}
Let $s \in \PGL_2(k)$ be an element with no $k$-rational fixed point.  Then $s =
\mat{ \alpha & \beta \\ 1 & \alpha}$ for some $\alpha, \beta \in k$ with $\beta$
a non-square.  In particular, $s$ has order~2.
\end{lemma}

Theorem~\ref{Thm: Finite subgroups all p, algebraically closed} applied to
$\PGL_2(k_\alg)$ shows that a finite $p$-irregular subgroup of $\PGL_2(k)$ is
either dihedral, isomorphic to $\PGL_2(\FF_q)$ for some $q$, or
$p$-semi-elementary. We treat each of these cases in turn.

\begin{lemma}
  \label{lem:dihedral_well_defined}
  Let $n > 1$ be an odd integer.  Let $\mathfrak{Dih}_{n}(k)$ denote the set of
  $k$-conjugacy classes of dihedral subgroups of $\PGL_2(k)$ of order $2n$.
  Write $D \colon \mathfrak{Dih}_n(k) \to k^\times / (k^\times)^2$ for the map
  that sends $G$ to $\overline{\det}(t)$ for any involution $t \in G$. Then $D$
  is well defined.
\end{lemma}

\begin{proof}
Let $G$ be dihedral of order $2n$, and let $t$ be an involution in
$G$. Replacing $G$ with one of its conjugates has the effect of replacing $t$
with an element of the same determinant. So it suffices to show that every
involution in $G$ has the same determinant, up to squares. Write $G = C \rtimes
\langle t \rangle$ for a cyclic group $C \subset \PGL_2(k)$. Any involution in
$G$ is of the form $st$ for some $s \in C$. Any two cyclic groups of odd order
are conjugate in $\PGL_2(k)$ (Theorem~\ref{thm:beauville_classification}), and
so $usu^{-1} = \mat{ \lambda + 1 & 1 \\ 1 & 1}$ for some $\lambda \in k$ and $u
\in \PGL_2(k)$ (Corollary~\ref{Cor: Alternate Cyclic}). Here $\lambda = \zeta +
\zeta^{-1}$ for some root of unity $\zeta$ with the same order as $s$. Then
$\det(s) = \lambda$. Since $\lambda$ lies in the algebraic closure of $\FF_2$,
it is a square.
\end{proof}

\begin{remark}
  We can simplify the above proof by conjugating $C$ to $\mat{ \mu_n(k) & \\ &
    1}$ since we are working over a separably closed field. However, our
  approach applies over an arbitrary field, and will thus be more useful in the
  final section of these notes.
\end{remark}

\begin{proposition}
\label{Prop: k-dihedral}
Fix an odd integer $n > 1$. Let $\mathfrak{Dih}_{n}(k)$ denote the set of
$k$-conjugacy classes of dihedral subgroups of $\PGL_2(k)$ of order $2n$.  Write
$D \colon \mathfrak{Dih}_n(k) \to k^\times / (k^\times)^2$ for the map that
sends $G$ to $\overline{\det}(t)$ for any involution $t \in G$. Then $D$ is a
bijection.
\end{proposition} 

\begin{proof}
We showed in Lemma~\ref{lem:dihedral_well_defined} that the map $D \colon
\mathfrak{Dih}_n(k) \to k^\times / (k^\times)^2$ given by $D(G) =
\overline{\det}(t)$ for any involution $t$ in $G$ is well defined. To see that
it is surjective, fix $\tau \in k^\times$ and define
\[
G_\tau := \bp\mu_n(k) & \\ & 1 \ep \rtimes \left\langle \bp & \tau \\ 1 &\ep \right\rangle.
\]
Evidently $G_\tau$ is dihedral of order $2n$, and $D(G_\tau) = \tau$. It remains
to prove injectivity.

Let $G$ be a dihedral subgroup of $\PGL_2(k)$ of order $2n$, and let $\zeta \in
k$ be a primitive $n$-th root of unity. We begin by showing that there is $\tau
\in k^\times$ such that $G$ is conjugate to $G_\tau$.  Write $G = C \rtimes
\langle t \rangle$ with $C$ the index~2 cyclic normal subgroup of $G$ and $t$ an
element of order~$2$. Since $C$ has odd order, the quadratic polynomial defining
its fixed points must have distinct --- and hence $k$-rational --- roots. Let us
conjugate them to $0$ and $\infty$ to get a new group $G' = \mat{\mu_n(k) & \\ &
  1} \rtimes \langle t' \rangle$. Then $t'$ acts on the fixed points of
$\mat{\mu_n(k) & \\ & 1}$, but it cannot fix both of them. Hence $t' = \mat{ &
  \tau \\ 1 &}$ for some $\tau \in k^\times$. 

Next we show that  $G_\tau$ and $G_{\tau'}$ lie in the same conjugacy class if
and only if $\tau' = \mu^2 \tau$ for some $\mu \in k^\times$. One direction is
easy: $\mat{ \mu & \\ & 1}G_\tau \mat{ \mu^{-1} & \\ & 1} = G_{\mu^2 \tau}$. For
the other direction, suppose that $s^{-1}G_\tau s = G_{\tau'}$ for some $s \in
\PGL_2(k)$.  Note that $s$ conjugates the index~2 normal subgroup of $G_{\tau'}$
to that of $G_\tau$, which is to say that $s$ lies in the normalizer of
$\mat{\mu_n(k) & \\ & 1}$. Hence $s = \mat{ \mu & \\ & 1}$ or $ \mat{& \mu \\ 1
  &}$ for some $\mu \in k^\times$. In these two cases, we have
\[
s^{-1} \bp & \tau \\ 1 & \ep s = \bp & \mu^{-2} \tau \\ 1 & \ep \quad \text{or} \quad
	\bp & \mu^2 \tau^{-1} \\ 1 & \ep.
\]
Since these elements have order~2 in $G_{\tau'}$, there must be $\zeta \in
\mu_n(k)$ such that
\[
\bp & \tau' \\ 1 & \ep = \bp \zeta & \\ & 1 \ep \bp & \mu^{-2} \tau \\ 1 & \ep 
	\quad \text{or} \quad \bp \zeta & \\ & 1 \ep \bp & \mu^2 \tau^{-1} \\ 1 & \ep.
\]
As the squaring map on $\mu_n(k)$ is surjective, it follows that $\zeta$ is a
square in $k$, and hence $\tau'$ is equal to a square times $\tau$, as desired.
\end{proof}

\begin{lemma}
\label{Lem: small normalizer}
Let $G$ be a finite subgroup of $\PGL_2(k)$ of even order. Define $k(G) / k$ to
be the field extension generated by the fixed points of all Sylow $2$-subgroups
of $G$. (We define $k(\infty) = k$.) Then $k(G) / k$ has degree at most~2.  If
$[k(G) : k] = 2$, then every Sylow $2$-subgroup of $G$ is its own normalizer.
\end{lemma}

\begin{proof}
Let $P$ be a Sylow $2$-subgroup of $G$. Recall that $P$ is abelian and has a
single fixed point in $\PP^1(k_\alg)$, say $\tau$ (Lemma~\ref{Lem: p-group fixed
  point}). Since all Sylow $2$-subgroups are conjugate, we have $k(G) =
k(\tau)$. If $\tau \in \PP^1(k)$, then $k(G) = k$. Otherwise, $P$ does not have
a $k$-rational fixed point. Let $s = \mat{ \alpha & \beta \\ 1 & \alpha}$ be a
nontrivial element of $P$ (Lemma~\ref{Lem: 2-normal form}). Then the fixed point
of $s$ is given by the equation $z^2 = \beta \not\in (k^\times)^2$, so that
$\tau = \sqrt{\beta}$. Therefore $k(G) / k$ has degree~2.
	
Suppose now that $u$ is a nontrivial element of the normalizer of $P$, so that
$u^{-1} P u = P$. It follows that $\tau$ is a fixed point of $u$. Since the
fixed points of $u$ are defined by an equation with $k$-coefficients, and since
$\tau$ is quadratic inseparable, it follows that $\tau$ is the unique fixed
point of $u$. Thus $u$ has order~2 by Lemma~\ref{Lem: 2-normal form}, and $u \in
P$.
\end{proof}

Next, we treat subgroups isomorphic to $\PGL_2(\FF_q) = \PSL_2(\FF_q)$. Note
that when $q = 2$, this group is dihedral and hence has already been dealt with.

\begin{proposition}
Let $q = 2^r$ for some $r > 1$. If $G \subset \PGL_2(k)$ is isomorphic to
$\PGL_2(\FF_q)$, then it is $k$-conjugate to $\PGL_2(\FF_q)$.
\end{proposition}

\begin{proof}
  Let $P$ be a Sylow $2$-subgroup of $G$. As $|\PGL_2(\FF_q)| = q(q^2 - 1)$, the
  order of $P$ is $q$. If the unique fixed point $\tau$ of $P$ is $k$-rational,
  then the argument in \S\ref{Sec: Lambda = squares} shows that $G$ is
  $k$-conjugate to $\PGL_2(\FF_q)$, as desired. Otherwise, $[k(\tau) : k] = 2$,
  and $P$ is its own normalizer (Lemma~\ref{Lem: small normalizer}). But this is
  absurd: the normalizer of a Sylow $p$-subgroup of $\PGL_2(\FF_q)$ is the Borel
  group containing~$P$.
\end{proof}

Finally, we attend to the finite $2$-semi-elementary subgroups of
$\PGL_2(k)$. If $G$ is such a subgroup, then it fixes a unique point $\tau \in
\PP^1(k_\alg)$. If $\tau$ is $k$-rational, then $G$ is split and we may
conjugate $\tau$ to infinity. The discussion in Section~\ref{Sec: Borel} applies
and we obtain the description in Theorem~\ref{Thm: Finite subgroups all p,
  separably closed}. If $\tau$ is not rational, then it must be quadratic
inseparable. In particular, $\tau^2 \in k$.

\begin{lemma}
  Let $\tau \in k_\alg$ be such that $\tau^2 \in k$ but $\tau \not\in
  k$. Suppose that $s \in \PGL_2(k)$ fixes $\tau$. Then
  \[
  s \in \Omega(\tau) =  \{I\} \cup
  \left\{ \bp \alpha & \tau^2 \\ 1 & \alpha \ep \ : \ \alpha \in k
  \smallsetminus \{\tau\} \right\}.
  \]
\end{lemma}

\begin{proof}
  We may suppose that $s$ is nontrivial. Since $\tau$ is inseparable and $s$ is
  defined over $k$, the fixed point polynomial for $s$ vanishes only at
  $\tau$. Apply Proposition~\ref{prop:alternate-elementary}.
\end{proof}

If $G$ is a non-split $2$-semi-elementary subgroup with fixed point $\tau$, then
the lemma shows $G \subset \Omega(\tau)$. If $\tau' \in k(\tau) \smallsetminus
k$, then $\Omega(\tau)$ is conjugate to $\Omega(\tau')$.  Indeed, since $\tau$
is quadratic, we may write $\tau' = u + v\tau$ for some $u,v \in k$ and $v \ne
0$. Looking at fixed points, we see immediately that
\[
  \Omega(\tau)  = \bp v & u \\ & 1 \ep^{-1} \Omega(\tau') \bp v & u \\ & 1 \ep.
  \]
In particular, any finite subgroup of $\Omega(\tau)$ is conjugate to one inside
$\Omega(\tau')$.

It remains to show that if $G, G' \subset \Omega(\tau)$ are subgroups that are
conjugate inside $\PGL_2(k)$, then in fact they are equal. Suppose that
$s^{-1}G's = G$ for some $s \in \PGL_2(k)$ . Then $s$ must fix $\tau$. The above
lemma shows that $s \in \Omega(\tau)$. As $\Omega(\tau)$ is abelian, we conclude
that $G = G'$.


\section{Galois Descent}
\label{Sec: Descent}

\begin{convention*}
For this section, $k$ denotes an arbitrary field of positive characteristic~$p$,
$k_\sep$ denotes a separable closure of $k$, $k_\alg$ denotes an algebraic
closure of $k_\sep$, and $\mathfrak{g} = \mathrm{Gal}(k_\sep / k)$.
\end{convention*}

We use the technique of Galois descent to pass from the classification of finite
subgroups of $\PGL_2$ over separably closed fields to the case of arbitrary
fields. It begins with a result of Beauville:
	
\begin{theorem}[{\cite[\S2]{Beauville_Finite_Subgroups_2010}}]
\label{Thm: Beauville}
Let $G$ be an algebraic group defined over a field $k$, let $H$ be a subgroup of
$G(k)$, let $N = N_{G(k)}(H)$ be its normalizer in $G(k)$, and let $Z$ be the
centralizer of $H$ in $G(k_\sep)$. Write $\Conj(H, G(k) )$ for the set of
subgroups of $G(k)$ that are conjugate to $H$ in $G(k_\sep)$ modulo conjugacy in
$G(k)$. Then there is a canonical isomorphism of pointed sets
\[
\ker\Big[ H^1(\gg, Z) \to H^1\big(\gg, G(k_\sep)\big) \Big] \ / \ N \stackrel{\sim}{\longrightarrow} \Conj(H, G(k)).
\]
\end{theorem}

We are able to give a complete description of the kernel in Beauville's result when $H$ is a $p$-irregular subgroup. 

\begin{theorem}
\label{Thm: H1}
Let $H \subset \PGL_2(k)$ be a finite $p$-irregular subgroup, and let $Z$ be the
centralizer of $H$ in $\PGL_2(k_\sep)$.
\begin{enumerate}
\item If $p > 2$, then $H^1(\gg, Z) = 1$.
\item If $p = 2$ and $H$ is not a 2-group, then $H^1(\gg, Z) = 1$.
\item If $p = 2$ and $H$ is a split 2-group, then $H^1(\gg, Z) = 1$.
\item If $p = 2$ and $H$ is a non-split $2$-group with inseparable fixed point
  $\tau$, then there is an isomorphism of abelian groups
  \[
  k / f(k \times k) \simarrow H^1(\gg,Z),
  \]
  where $f(x,y) = x^2 + y + \tau^2 y^2$.
\end{enumerate}
In all cases, the canonical homomorphism $H^1(\gg,Z) \to H^1(\gg, \PGL_2)$ has
trivial kernel.
\end{theorem}

Applying Theorem~\ref{Thm: Beauville}, we find that $\mathrm{Conj}\left(H,
\PGL_2(k) \right)$ contains a unique element for any finite $p$-irregular
subgroup $H$. We state this formally:

\begin{corollary}
\label{Cor: Descent}
Let $H$ be a finite $p$-irregular subgroup of $\PGL_2(k)$. If $H' \subset
\PGL_2(k)$ is another subgroup that is conjugate to $H$ inside $\PGL_2(k_\sep)$,
then it is already conjugate inside $\PGL_2(k)$.
\end{corollary}

\begin{proof}[Proof of Theorem~\ref{Thm: H1}]
Any element $s \in Z$ must stabilize the unique fixed point of any Sylow
$p$-subgroup of $H$. If $H$ has at least~3 Sylow $p$-subgroups, then $s$ fixes
at least~3 points of $\PP^1(k_\alg)$. That is $Z = 1$, and hence $H^1(\gg, Z) =
1$ for trivial reasons.  Looking at the possible isomorphism types of
$p$-irregular subgroups in Theorem~\ref{Thm: Finite subgroups all p,
  algebraically closed}, we see that this argument covers everything but the
case where $H$ is a $p$-semi-elementary subgroup.

Now suppose that $H$ is a $p$-semi-elementary subgroup of $\PGL_2(k)$ with Sylow
$p$-subgroup $P$. Write $\tau \in \PP^1(k_\alg)$ for the unique fixed point of
$H$. We have two cases to consider depending on whether $\tau$ is $k$-rational. 

\noindent \textbf{Case $\tau$ is $k$-rational.}  We may conjugate $\tau$ to
$\infty$, so that $P \subset \mat{1 & k \\ & 1}$. The centralizer of $P$ in
$\PGL_2(k_\sep)$ is $U(k_\sep) = \mat{1 & k_\sep \\ & 1}$, and hence $Z \subset
U(k_\sep)$. If $P = H$, then $Z = U(k_\sep) \cong \mathbb{G}_a(k_\sep)$ as
$\gg$-modules. It is well known that $H^1(\gg, \mathbb{G}_a) = 1$
\cite[Ch.~X.1]{Serre_Corps_Locaux}. If instead $P \subsetneq H$, then $H$
contains an element $u$ of order prime to $p$. The fixed points of $u$ are
$\infty$ and a second $k$-rational point $\tau'$. Any $s \in Z$ must also fix
$\tau'$, which means $Z = 1$, and $H^1(\gg, Z) = 1$.

Note that the case where $\tau$ is $k$-rational covers the case where $p >
2$. Indeed, the defining equation for the fixed points of a nontrivial element
of $P$ is quadratic with a unique solution; if $p > 2$, then $\tau$ is
automatically $k$-rational.

\noindent \textbf{Case $\tau$ is not $k$-rational.} We must have $p = 2$ and $H$
is a 2-group. Then $\tau$ is purely inseparable over $k$, so $\tau^2 \in k$. The
centralizer of $H$ is the group of elements that fix $\tau$:
\[
Z = \left\{ \bp \alpha & \tau^2 \beta \\ \beta & \alpha \ep \ :  \ 
    \alpha, \beta \in k_\sep, \ \alpha^2 + \tau^2 \beta^2 \neq 0 \right\}.
\]
    
Consider the short exact sequence of $\gg$-module homomorphisms
\[
1 \longrightarrow Z \stackrel{\phi}{\longrightarrow}  \mathbb{G}_a \times \mathbb{G}_a
\stackrel{f}{\longrightarrow} \mathbb{G}_a \longrightarrow 0,
\]
where 
\[
\phi \bp \alpha & \tau^2 \beta \\ \beta & \alpha \ep = \left( \frac{\alpha \beta}{\alpha^2 + \tau^2 \beta^2},
\frac{\beta^2}{\alpha^2 + \tau^2 \beta^2} \right) \quad \text{and} \quad
f(x,y) = x^2 + y + \tau^2 y^2.
\]
(To see that $\ker(f) \subset \im(\phi)$, take $\alpha = x/y$ and $\beta = 1$
when $y \neq 0$. To see that $f$ is onto, take $z \in k_\sep$ and note that
$\tau^2 y^2 + y + z$ is separable as a polynomial in $y$. So $f(0,y) = z$.)
Passing to the long exact sequence on cohomology and using the fact that
$H^1(\gg, \mathbb{G}_a \times \mathbb{G}_a) \cong H^1(\gg, \mathbb{G}_a) \times
H^1(\gg, \mathbb{G}_a) = 1$, we see that
\[
H^0(\gg, \mathbb{G}_a \times \mathbb{G}_a) \stackrel{f}{\longrightarrow}
H^0(\gg, \mathbb{G}_a) \stackrel{\delta}{\longrightarrow} H^1(\gg, Z) \longrightarrow 1.
\]
The coboundary map $\delta$ induces the desired isomorphism $k / f(k\times k)
\simarrow H^1(\gg, Z)$.

Finally, we must show that the kernel of the homomorphism $H^1(\gg,Z) \to
H^1(\gg, \PGL_2)$ is trivial. It is obvious when $H^1(\gg,Z) = 1$, so let us
assume that $p = 2$, $H$ is a 2-group, and its fixed point $\tau$ satisfies
$\sqrt{\tau} \not\in k$.  Let $z_s: \gg \to Z$ be a 1-cocycle that becomes a
coboundary when its target is extended to $\PGL_2$. Then there exists $u \in
\PGL_2(k_\sep)$ such that $z_s = u^{-1} (^su)$ for every $s \in \gg$. Since
every element of $Z$ fixes $\sqrt{\tau}$, we see that
\[
\sqrt{\tau} = z_s.\sqrt{\tau} = u^{-1} (^s u).\sqrt{\tau} \ \Rightarrow \
	u.\sqrt{\tau} = ({^su}).\sqrt{\tau} = {^s(}u.\sqrt{\tau}).
\]
The final equality follows from the fact that $\sqrt{\tau}$ lies in a purely
inseparable extension of $k$, so that $\gg$ acts trivially on it. Thus
$u.\sqrt{\tau}$ is defined over $k(\sqrt{\tau})$. Note that $u.\sqrt{\tau}
\not\in \PP^1(k)$ since that would imply $\sqrt{\tau} = u^{-1}.(u.\sqrt{\tau})
\in \PP^1(k_\sep)$. Choose $v \in \PGL_2(k)$ such that $v.(u.\sqrt{\tau}) =
\sqrt{\tau}$. Then $vu \in Z$, and $z_s = (vu)^{-1} \cdot {^s(vu)}$. That is,
$z_s$ is already a coboundary, and hence trivial in $H^1(\gg,Z)$.
\end{proof}


\section{Proofs of the Main Theorems}
\label{Sec: Proofs}

\begin{convention*}
Throughout this section, $k$ is an arbitrary field of characteristic $p \ge 0$.
\end{convention*}

Theorem C has already been proved in \S\ref{Sec: p-regular} and \S\ref{Sec:
  Separably closed}.  We are now ready to prove Theorems~A and~B from the
introduction.

\begin{proof}[Proof of Theorem~A]
As $\PGL_2(k) \subset \PGL_2(k_\sep)$, the isomorphism type of a finite
$p$-irregular subgroup of $\PGL_2(k)$ must be among those in Theorem~\ref{Thm:
  Finite subgroups all p, separably closed}. 

We begin with subgroups isomorphic to $\PGL_2(\FF_q)$ for some $q$. If $\FF_q
\subset k$, then evidently $\PGL_2(\FF_q) \subset \PGL_2(k)$. Conversely,
suppose that $G \subset \PGL_2(k)$ is isomorphic to $\PGL_2(\FF_q)$. We must
show $\FF_q \subset k$; as this is obvious when $q = 2$, we may assume $q >
2$. Theorem~\ref{Thm: Finite subgroups all p, separably closed} shows that $G$
is conjugate inside $\PGL_2(k_\sep)$ to $\PGL_2(\FF_q)$, and Corollary~\ref{Cor:
  Descent} shows that this conjugacy takes place in $\PGL_2(k)$. It follows that
$\FF_q \subset k$.

An identical argument applies to subgroups isomorphic to $\PSL_2(\FF_q)$.

Next we treat the $p$-semi-elementary subgroups of order $p^m n$ with $m \ge 1$
and $n \ge 1$ coprime to $p$. Let $e$ be the minimum positive integer such that
$p^e \equiv 1 \pmod n$, and suppose that $e \mid m$ and $m \le
\dim_{\FF_p}(k)$. Let $\Gamma \subset k$ be an $\FF_{p^e}$-vector space of
dimension $m / e$. Then $|\Gamma| = p^m$, and $\mat{ \mu_n(k) & \Gamma \\ & 1}$
is a $p$-semi-elementary subgroup of order $p^m n$. Conversely, suppose that $G
\subset \PGL_2(k)$ is a split $p$-semi-elementary group of order $p^m
n$. Writing $e$ for the order of $p$ in $(\ZZ / n\ZZ)^\times$, we must show that
$e \mid m$ and $m \le \dim_{\FF_p}(k)$. Since $G$ is split, its unique fixed
point is $k$-rational. We can conjugate its fixed point to infinity and write $G
= \mat{ \mu_n(k) & \Gamma \\ & 1}$ as in \S\ref{Sec: Borel}. Then $\FF_{p^e}
\subset \FF_\Gamma$ since $\FF_\Gamma$ contains the $n$-th roots of unity. It
follows that $\Gamma$ is an $\FF_{p^e}$-vector space, from which we conclude
that $e \mid m$. As $\Gamma \subset k$, it is clear that $m =
\dim_{\FF_p}(\Gamma) \le \dim_{\FF_p}(k)$.

Now we suppose that $p = 2$ and treat the non-split $2$-elementary subgroups of
order $2^m$, $m \ge 1$. Suppose there is $\tau \in k_\alg$ such that $\tau^2 \in
k$ but $\tau \not\in k$. Evidently $k$ is not finite, and so the group
$\Omega(\tau) \subset \PGL_2(k)$ of Proposition~\ref{prop:alternate-elementary}
is a 2-elementary group of infinite rank. Choose any subgroup of
rank~$m$. Conversely, suppose there exists a non-split $2$-elementary subgroup
$G \subset \PGL_2(k)$ of order $2^m$. By definition, the unique fixed point
$\tau$ of $G$ is not $k$-rational. Let $s \in G$ be non-trivial. By
Proposition~\ref{prop:alternate-elementary}, we see $s = \mat{\alpha & \tau^2
  \\ 1 & \alpha}$ for some $\alpha \in k$. It follows that $\tau^2 \in k$, but
$\tau \not\in k$. That is, $k$ contains a non-square.

Next we assume $p = 2$ and deal with the existence of dihedral subgroups of
order $2n$ with $n > 1$ odd. If $\zeta$ is a primitive $n$-th root of unity such
that $\lambda = \zeta + \zeta^{-1} \in k$, then we set $s = \mat{ \lambda + 1 &
  1 \\ 1 & 1}$ and $t = \mat{ & 1 \\ 1 &}$.  Corollary~\ref{Cor: Alternate
  Cyclic} shows that $s$ has order~$n$, and direct calculation shows that $tst =
s^{-1}$. That is, $\langle s, t \rangle \subset \PGL_2(k)$ is dihedral of the
desired order. Conversely, suppose that $H \subset \PGL_2(k)$ is dihedral of
order~$2n$. In particular, $H$ contains a $2$-regular cyclic subgroup of order
$n$, which shows that $\zeta + \zeta^{-1} \in k$ for some primitive $n$-th root
of unity $\zeta$ (Theorem~\ref{thm:serre_existence}).

Finally, we suppose that $p = 3$ and consider the existence of icosahedral
subgroups of $\PGL_2(k)$. If we assume that $\FF_9 \subset k$, then $k$ contains
a solution $\lambda$ to $X^2 + X + 2 = 0$. Define
\[
s = \bp \lambda + 1 & -1 \\ 1 & 1 \ep \ \quad \ t = \bp -1 & \\ & 1 \ep.
\]
Lemma~\ref{Lem: order n} shows that $s$ has order~5, that $t$ has order~2, and
that $st$ has order~3. It follows that the subgroup $\langle s, t \rangle
\subset \PGL_2(\FF_9)$ is isomorphic to $\mathfrak{A}_5$ (Lemma~\ref{Lem:
  Icosahedral presentation}). Conversely, suppose that $G \subset \PGL_2(k)$ is
an icosahedral subgroup. Then it contains a subgroup of order~5. By
Theorem~\ref{thm:serre_existence}, we see that $ \lambda := \zeta + \zeta^{-1}
\in k$ for some primitive fifth root of unity $\zeta$. But $\lambda$ is
quadratic over $\FF_3$, with minimal polynomial $X^2 + X +2$, and hence it
generates $\FF_9$. 
\end{proof}

\begin{proof}[Proof of Theorem~B]
Over the separable closure $k_\sep$ of $k$, there are five kinds of subgroups to
consider (Theorem~\ref{Thm: Finite subgroups all p, separably closed}). We will
determine the conjugacy classes over $k$ in each case. 

We begin with subgroups isomorphic to $\PGL_2(\FF_q)$ or $\PSL_2(\FF_q)$ with $q
> 2$. Theorem~\ref{Thm: Finite subgroups all p, separably closed} shows there is
a unique $\PGL_2(k_\sep)$-conjugacy class of subgroups isomorphic to either of
these, and Corollary~\ref{Cor: Descent} shows there is at most one conjugacy
class in $\PGL_2(k)$.

The case of split $p$-semi-elementary subgroups of order $p^m n$ is handled just
as in \S\ref{Sec: Borel}. 

The case of non-split $2$-elementary subgroups of order $2^m$ is immediate from
the discussion at the end of \S\ref{Sec: Separably closed}; indeed, the
hypothesis that $k$ was separably closed was never used there. 

Next we assume $p = 2$ and deal with existence of dihedral subgroups of order
$2n$ ($n > 1$ odd). Lemma~\ref{lem:dihedral_well_defined} shows that the map $D
\colon \mathfrak{Dih}_n(k) \to k^\times / (k^\times)^2$ associating $G$ to the
determinant of one of its involutions is well defined. Suppose that $G, G'$ are
dihedral of order $2n$ satisfying $D(G) = D(G')$. Then $G$ and $G'$ are
conjugate over $k_\sep$ by Theorem~\ref{Thm: Finite subgroups all p, separably
  closed}. Corollary~\ref{Cor: Descent} shows that $G$ and $G'$ are conjugate
over $k$. It follows that $D$ is injective. To complete the proof in this case,
we must show that $D$ is surjective when $k$ contains a primitive $n$-th root of
unity, say $\zeta$. But this is clear: given any $\tau \in k^\times$, the group
$G_\tau := \langle \mat{\zeta & \\ & 1}, \mat{& \tau \\ 1&} \rangle$ is dihedral
of order $2n$ and satisfies $D(G_\tau) = \tau$. 

Finally, suppose that $p = 3$, and $\FF_9 \subset k$. Theorem~A shows that
$\PGL_2(k)$ contains an icosahedral subgroup. As above, Theorem~\ref{Thm: Finite
  subgroups all p, separably closed} and Corollary~\ref{Cor: Descent} show that
there is at most one conjugacy class of such subgroups in $\PGL_2(k)$.
\end{proof}
	
\begin{proof}[Proof of Theorem~D]
Let $\FF_q$ be a finite field with $q = p^r$ elements. Theorems~A and~C give the
list of possible isomorphism types of subgroups for us to consider: cyclic,
dihedral, tetrahedral, octahedral, icosahedral, split $p$-semi-elementary, and
$\PSL/\PGL$. We look at each in turn, though there will be overlap among the
cases.

Suppose first that $C = \langle s \rangle$ is a cyclic subgroup of
$\PGL_2(\FF_q)$ of order $n > 2$ and coprime to $p$.
Theorem~\ref{thm:serre_existence} asserts that $\lambda := \zeta + \zeta^{-1}
\in \FF_q$ for some primitive $n$-th root of unity $\zeta$. If $\zeta \in
\FF_q$, then $n \mid (q-1)$. Otherwise, $\zeta$ is quadratic over $\FF_q$ and $n
\mid (q+1)$. That is, $q \equiv \pm 1 \pmod n$. Conversely, if $\lambda := \zeta
+ \zeta^{-1} \in \FF_q$ for some primitive $n$-th root of unity $\zeta$, then
$\mat{ \lambda + 1 & -1 \\ 1 & 1} \in \PGL_2(k)$ has order $n$
(Corollary~\ref{Cor: Alternate Cyclic}).
Theorem~\ref{thm:beauville_classification} shows that if a cyclic subgroup of
order $n$ exists, then it is unique up to conjugation.

Next we look at cyclic subgroups of order~2 when $p \ne 2$. The element $\mat{&1
  \\ 1&}$ has order~2, so such subgroups always exist. Since $\FF_q^\times /
(\FF_q^\times)^2$ has order~2, Theorem~\ref{thm:beauville_classification}(3)
shows that there are two distinct conjugacy classes. They may be represented by
$\{I, \mat{&1 \\ 1&}\}$ and $\{I, \mat{ & \zeta \\ 1 &}\}$ for $\zeta$ a
primitive $(q-1)$-st root of unity. The former is split and the latter is not.

Now we turn to the dihedral groups, beginning with those of order $2n$ where $n
\ge 3$ and $p$ is coprime to $2n$. Suppose first that $q \equiv 1 \pmod{n}$, so
that $\FF_q$ contains a primitive $n$-th root of
unity~$\zeta$. Theorem~~\ref{thm:beauville_classification}(5) asserts that
conjugacy classes of dihedral subgroups of order $2n$ are in bijection with
elements of the group $\FF_q^\times / \mu_n(\FF_q)(\FF_q^\times)^2$. If $q
\equiv 1 \pmod{2n}$, then every $n$-th root of unity is a square and this group
has order $2$; otherwise, it has order~1.


Next we consider dihedral groups of order $2n$ where $q \equiv -1 \pmod{n}$ and
$n \ge 3$. First note that such subgroups always exist: if $\zeta \in
\FF_{q^2}^\times$ is a primitive $n$-th root of unity, then $\lambda := \zeta +
\zeta^{-1} \in \FF_q$ and $\left \langle \mat{ \lambda+1 & -1 \\ 1 & 1}, \mat{
  -1 & \lambda \\ & 1}\right\rangle$ is dihedral of the desired order
(cf. Corollary~\ref{Cor: Alternate Cyclic}). Suppose now that $H \subset
\PGL_2(\FF_q)$ is any dihedral subgroup of order~$2n$. Let $s$ be a generator of
the cyclic subgroup of index~2. Since cyclic $p$-regular subgroups are unique up
to conjugation, we may assume that $s = \mat{ \lambda+1 & -1 \\ 1 & 1}$. Let $t$
be an involution in $H$, so that $H = \langle s,t\rangle$. Set $u = \mat{1 &
  -\zeta^{-1} \\ 1 & -\zeta} \in \PGL_2(\FF_{q^2})$; conjugating by $u$ has the
effect of moving the fixed points of $s$ to $\infty$ and $0$:
\[
u s u^{-1} = \bp \zeta & \\ & 1 \ep.
\]
Since $s$ fixes $\zeta$ and $\zeta^{-1}$, $t$ must swap these points. It follows
that there is $\tau \in \FF_{q^2}^\times$ such that
\[
utu^{-1} = \bp & \tau \\ 1 &\ep.
\]
We can recover $t$ from $\tau$:
\[
u^{-1}\bp &\tau \\ 1& \ep u =
\bp \zeta^{-1} - \tau \zeta & -\zeta^{-2} + \tau\zeta^2 \\
1 - \tau & -\zeta^{-1} + \tau \zeta \ep.
\]
If $\tau = 1$, then $u^{-1} \mat{ & \tau \\ 1 &} u = \mat{ -1 & \lambda \\ &
  1} \in \PGL_2(\FF_q)$. Otherwise, we find that
\[
u^{-1}\bp &\tau \\ 1& \ep u =
\bp \frac{\zeta^{-1} - \tau \zeta}{1-\tau} & \frac{-\zeta^{-2} + \tau\zeta^2}{1-\tau} \\ 1 & \frac{-\zeta^{-1} + \tau \zeta}{1-\tau} \ep \qquad (\tau \ne 1).
\]
Since this must lie in $\PGL_2(\FF_q)$, the top left entry must lie in
$\FF_q$. That is, it is Frobenius invariant, and we have
\[
\frac{\zeta - \tau^q \zeta^{-1}}{1-\tau^q} = \frac{\zeta^{-1} - \tau \zeta}{1-\tau}.
\]
Clearing denominators and massaging shows that $\tau^{q+1} = 1$. Thus $\tau \in
(\FF_{q^2}^\times)^{q-1}$.

We wish to determine how $\tau$ depends on the conjugacy class of $H$. The
normalizer of $uHu^{-1}$ in $\PGL_2(\FF_{q^2})$ consists of elements of the form
$\mat{ \alpha & \\ & 1}$ and $\mat{ & \beta \\ 1 &}$, with $\alpha,\beta \in
\FF_{q^2}^\times$. We restrict attention to elements $v$ of this form with
$u^{-1}vu \in \PGL_2(\FF_q)$. A computation as in the previous paragraph shows
that $\alpha^{q+1} = \beta^{q+1} = 1$. Conjugating $uHu^{-1}$ by $\mat{\alpha &
  \\ & 1}$ has the effect of replacing $\tau$ with $\alpha^2 \tau$. Conjugating
by $\mat{& \beta \\ 1 &}$ replaces $\tau$ with $\beta^2 \tau^{-1} =
(\beta/\tau)^2 \tau$. Note that $uHu^{-1}$ also contains the involutions
$\zeta^i \tau$ for any $i$. It follows that for any $\tau' \in
\mu_n(\FF_{q^2}^\times) (\FF_{q^2}^\times)^{2(q-1)}$, the group $H$ is conjugate
in $\PGL_2(\FF_q)$ to the group $H'$, where $uH'u^{-1} = \left \langle
\mat{\zeta & \\ & 1}, \mat{ & \tau' \\ 1 &} \right\rangle$. Thus, the conjugacy
classes of dihedral subgroups of $\PGL_2(\FF_q)$ of order $2n$ are in bijection
with elements of the group $(\FF_{q^2}^\times)^{q-1} / \mu_n(\FF_{q^2})
(\FF_{q^2}^\times)^{2(q-1)}$. (Compare
Theorem~\ref{thm:beauville_classification}(5).)  If $q \equiv -1 \pmod{2n}$,
then every element of $\mu_n(\FF_{q^2})$ is a square in $\mu_{q+1}(\FF_{q^2})$
and $(\FF_{q^2}^\times)^{q-1} / \mu_n(\FF_{q^2}) (\FF_{q^2}^\times)^{2(q-1)}$
has order~2. Otherwise, it has order~1.

Now we look at 4-groups with $p \ne 2$. Since every conic over $\FF_q$ has a
point, and since $\FF_q^\times / (\FF_q^\times)^2$ has order~2,
Theorem~\ref{thm:beauville_classification}(4) shows that there are exactly two
conjugacy classes of subgroups of order~4.

Next we consider dihedral subgroups of order $2n$ when $p = 2$ and $n > 1$ is
odd. We may suppose that $q \equiv \pm 1 \pmod{n}$, so that cyclic subgroups of
order~$n$ exist. Since $\FF_q^\times = (\FF_q^\times)^2$, Theorem~B(4) shows
that there is a unique conjugacy class of dihedral subgroups of order $2n$.

Next we consider subgroups isomorphic to $\mathfrak{A}_4$. If $p \ne 2$, then
Theorem~\ref{thm:serre_existence} shows that such a subgroup exists if and only
if $-1$ is the sum of two squares in $\FF_q$, which always holds by a counting
argument. If $p = 2$, then Theorem~\ref{thm:serre_existence} shows that such a
subgroup exists if and only if $\FF_4 \subset \FF_q = \FF_{2^r}$, which is to
say $r$ is even.  Theorem~\ref{thm:beauville_classification} shows that such a
subgroup is unique up to conjugation when $p \ne 2,3$. When $p = 3$, we find
that $\mathfrak{A}_4 \cong \PSL_2(\FF_3)$ (Corollary~\ref{cor:
  tetrahedral}). Theorem~B(1) shows that such a subgroup is unique up to
conjugation. When $p = 2$, a tetrahedral subgroup is $2$-semi-elementary; the
argument used to prove Proposition~\ref{Prop: Tetrahedral 2} shows that it is
conjugate to the standard Borel subgroup in $\PGL_2(\FF_4)$.

The case of octahedral subgroups is identical to that of tetrahedral subgroups,
except that they cannot occur when $p = 2$. Indeed, $\mathfrak{S}_4$ contains an
element of order~4, while $\PGL_2(\FF_{2^n})$ does not. Use
Corollary~\ref{cor:octahedral3} to handle the case $p=3$.

Next we look at subgroups isomorphic to $\mathfrak{A}_5$. If $p \ne 2, 5$, then
Theorem~\ref{thm:serre_existence} shows that such a subgroup exists if and only
if $-1$ is the sum of two squares in $\FF_q$ (always true) and $\sqrt{5} \in
\FF_q$. By quadratic reciprocity, the latter condition holds if and only if $q
\equiv \pm 1 \pmod{5}$. The case $p = 5$ is taken care of by the fact that
$\PGL_2(\FF_5) \cong \mathfrak{A}_5$ (Proposition~\ref{prop:icosahedral5}). If $p = 2$, then
Theorem~\ref{thm:serre_existence} shows that such a subgroup exists if and only
if $-1$ is the sum of two squares in $\FF_q$ (always true) and $\FF_4 \subset
\FF_q = \FF_{2^r}$, which is to say $r$ is even. Note that $r$ is even if and
only if $q \equiv \pm 1 \pmod{5}$. Theorem~\ref{thm:beauville_classification}
shows that there is at most one conjugacy class of icosahedral subgroups when $p
\ne 2, 3, 5$. Theorem~B(1,5) yield the same conclusion for $p = 3, 5$. When $p =
2$, we note that $\PGL_2(\FF_4) \cong \mathfrak{A}_5$
(Theorem~\ref{thm:serre_existence}(4)); Theorem~B(1) shows that the conjugacy
class of icosahedral subgroups is unique in this case.

Next, let us look at the subgroups isomorphic to $\PSL_2(\FF_{p^s})$ and
$\PGL_2(\FF_{p^s})$. Evidently, such subgroups exist if $s \mid r$, since then
$\FF_{p^s} \subset \FF_q$. Theorem~A(1) shows this is a sufficient condition as
well, and Theorem~B(1) shows that there is a unique conjugacy class of such
subgroups.

Finally, we treat the $p$-semi-elementary subgroups of $\PGL_2(\FF_q)$. Every
such subgroup has a unique fixed point, which must be $\FF_q$-rational since
this field is perfect. Let $G$ be such a subgroup, and suppose that its order is
$p^m n$ with $n$ coprime to $p$. Theorem~B(2) shows that $G$ is conjugate to
$\mat{ \mu_n(\FF_q) & \Gamma \\ & 1}$ for some rank-$m$ subgroup $\Gamma \subset
\FF_q$ that is stable under multiplication by elements of $\mu_n(\FF_q)$. In
particular, note that $m \le r$. Let $e$ be the order of $p$ in $(\ZZ /
n\ZZ)^\times$. Then $n \mid (p^e - 1)$, which implies $\FF_{p^e} \subset
\FF_q$. That is, $e \mid r$. Since $\Gamma$ is stable under the multiplicative
action of $\FF_{p^e}$, it follows that it is an $\FF_{p^e}$-vector space. Thus,
$e \mid m$.  Conversely, let us suppose that $e$ is the order of $p$ in $(\ZZ /
n\ZZ)^\times$, that $e \mid\gcd(r,m)$, and that $m \le r$. Then $\FF_q$ contains
a primitive $n$-th root of unity, and $\mat{\mu_n(\FF_q) & \FF_{p^m} \\ &
  1}$ is a $p$-semi-elementary subgroup of the desired order.  Theorem~B(2)
shows that the conjugacy classes of such subgroups are given by homothety
classes $\FF_{p^e}$-vector subspaces of $\FF_q$.
\end{proof}


\bibliographystyle{plain}
\bibliography{finite_subgroups}

\end{document}